\tikzset{
  symbol/.style={
    draw=none,
    every to/.append style={
      edge node={node [sloped, allow upside down, auto=false]{$#1$}}}
  }
}
\newcommand{\res}{\mathord{\upharpoonright}}
\newcommand{\Fraisse}{Fra\"iss\'e}
\newtheorem{theorem}{Theorem}
\newtheorem{lemma}{Lemma}
\newtheorem{corollary}[theorem]{Corollary}
\newtheorem{proposition}[theorem]{Proposition}
\newtheorem{claim}{Claim}
\newtheorem{fact}{Fact}
\theoremstyle{remark}
\newtheorem{rem}[theorem]{\bf Remark}
\newtheorem{question}{Question}
\newcommand{\C}{\mathcal{C}}
\newcommand{\K}{\mathcal{K}}
\newtheorem{example}[theorem]{Example}
\title[Universality vs Genericity and $C_4$-free graphs]{Universality vs Genericity and $C_4$-free graphs}
\author[A. Panagiotopoulos]{Aristotelis Panagiotopoulos}
\address{Institut f\"ur Mathematische Logik und Grundlagenforschung, Westfalische Wilhelms-Universit\"at M\"unster, Einsteinstr. 62, 48149 M\"unster, Germany 
}
\email{aristotelis.panagiotopoulos@gmail.com}
\author[K. Tent]{Katrin Tent}
\address{Institut f\"ur Mathematische Logik und Grundlagenforschung, Westfalische Wilhelms-Universit\"at M\"unster, Einsteinstr. 62, 48149 M\"unster, Germany}
\email{tent@wwu.de}
\thanks{
Funded by the Deutsche Forschungsgemeinschaft (DFG, German Research Foundation) under Germany's Excellence Strategy EXC 2044--390685587, Mathematics M\"unster: Dynamics--Geometry--Structure
and by CRC 1442 Geometry: Deformations and Rigidity.}
\keywords{generic structure, cycle free graph, $C_4$-free graph, bowtie-free graph, universal structure, weak amalgamation}
\subjclass[2000]{05C75, 05C38, 03C52}
\begin{document}
\maketitle

\begin{abstract} We show that the existence of a universal structure implies the existence of a generic structure for any approximable class $\mathcal{C}$ of countable structures.  We also show  that the converse is not true. As a consequence, we provide several new examples  of weak Fra\"iss\'e classes of finite graphs. Finally, we show that the class of all countable $C_4$-free graphs does not contain a generic structure, strengthening a result of A. Hajnal  and J. Pach.

\end{abstract}

\section{Introduction}

An {\bf approximating class} is any family $\mathcal{K}$ of finitely generated $\mathcal{L}$-structures which is countable up to isomorphism and which has the  {\bf Hereditary Property} (HP) and the {\bf Extension Property} (EP):
\begin{itemize}
\item (HP) if $B\in \mathcal{K}$,  $A$  embeds in $B$, and $A$ is finitely generated, then $A\in \mathcal{K}$;
\item (EP)  every $A\in\mathcal{K}$ admits a non-surjective embedding to some $B\in\mathcal{K}$.
\end{itemize} 

Let $\mathcal{K}$ be an approximating class. A class $\mathcal{C}$ of countable  $\mathcal{L}$-structures is {\bf approximable} by $\K$ if $\C$ consists  precisely of all  $\mathcal{L}$-structures $N$ on universe $\mathbb{N}=\{0,1,\ldots\}$  that  satisfy the following properties:
\begin{itemize}
\item every finitely generated substructure $A$ of $N$ is in  $\mathcal{K}$;
\item $N$ is not finitely generated.
\end{itemize}
In this case, we write $\mathcal{K}=\mathrm{Age}(\mathcal{C})$ and $\mathcal{C}=\lim(\mathcal{K})$.  We say that $\mathcal{C}$ is {\bf approximable} if  $\mathcal{C}=\lim(\mathcal{K})$ for some approximating class $\mathcal{K}$.
Standard examples of approximable  classes are classes consisting of all countably infinite graphs  that omit all graphs which are included in some fixed   countable collection $\mathcal{F}$ of ``forbidden" finite graphs.

In what follows, we will always assume that $\mathcal{C}$ is approximable and hence  each structure in $\C$ has universe $\mathbb{N}$.
We say that $\mathcal{C}$  has a  {\bf universal element}  $U\in \mathcal{C}$, if every $N\in\mathcal{C}$ embeds in $U$. 
We say that  $\mathcal{C}$ has a {\bf generic element} $G\in \mathcal{C}$, if in the Baire space  $\mathcal{C}$   (see Section \ref{S:E}) the following set is  comeager:
\[\{N\in \mathcal{C} \; \colon  N \text{ is isomorphic to } G \}.\]

Several  approximable classes $\mathcal{C}$ admit a structure $M\in\mathcal{C}$ that is both universal and generic. For example, the  Rado graph is both a universal and a generic element of the class of all countable graphs; see \cite{Rado,TZ}.
More generally, if $\mathcal{K}=\mathrm{Age}(\mathcal{C})$ is a  \Fraisse{} class (see Section \ref{S:E}), then  $\mathcal{C}$ contains a canonical structure  $M\in \mathcal{C}$---known as  the   \Fraisse{} limit of $\mathcal{K}$---which is both universal and generic. However, it is possible for an approximable class $\mathcal{C}$ to  admit a universal $U\in\mathcal{C}$ that is not generic and a generic $G\in\mathcal{C}$ that is not universal; see Example \ref{E:1}. It is therefore natural to ask whether there is an example of an approximable class which admits a universal element but does not admit a generic element and similarly whether there is    an approximable class which admits a generic  element but does not admit a universal element. We settle this with the following theorem.

\begin{theorem}\label{T:1}
Let $\C$ be an approximable class of  $\mathcal{L}$-structures. If
 $\C$  admits a universal element, then $\mathcal{C}$ admits a generic element. The converse is not true.
\end{theorem}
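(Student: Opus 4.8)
The plan is to factor the first implication through the weak amalgamation property. Recall that $\K$ has the \textbf{weak amalgamation property} (WAP) if for every $A\in\K$ there is an extension $A\subseteq B$ in $\K$ such that any two further extensions $B\subseteq C_1$, $B\subseteq C_2$ in $\K$ amalgamate over $A$, i.e.\ there are $D\in\K$ and embeddings $C_1\hookrightarrow D$, $C_2\hookrightarrow D$ agreeing on $A$. I would invoke the known characterization that an approximable class $\C$ admits a generic element precisely when $\K=\mathrm{Age}(\C)$ has WAP (the JEP being part of the standing hypotheses). It then suffices to prove that if $\C$ admits a universal element then $\K$ has WAP, and I would establish this by contraposition.

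So assume WAP fails: there is $A\in\K$ such that for every extension $A\subseteq B$ in $\K$ there are extensions $B\subseteq C_1$, $B\subseteq C_2$ that do \emph{not} amalgamate over $A$. Let $U\in\C$ be an arbitrary candidate for a universal element. Since $U$ has universe $\mathbb{N}$ and $A$ is finitely generated, there are only countably many embeddings $f_0,f_1,\dots\colon A\to U$, and these enumerate all copies of $A$ in $U$. The single-step lemma at the heart of the argument is: for every finitely generated $D$ with $A\subseteq D$ and every index $i$, there is an extension $D\subseteq D'$ in $\K$ such that no embedding $D'\hookrightarrow U$ restricts to $f_i$ on $A$. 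Indeed, if this failed for some pair $(D,i)$, then \emph{every} extension of $D$ would embed into $U$ over $f_i$; applying this to witnesses $C_1,C_2\supseteq D$ of the failure of amalgamation, the two resulting embeddings into $U$ agree with $f_i$ on $A$, so the finitely generated substructure of $U$ that their images generate is an amalgam of $C_1,C_2$ over $A$ lying in $\K$ by (HP). Thus $B:=D$ would witness WAP at $A$, a contradiction.

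Using this lemma I would build $N\in\C$ not embedding into $U$ by a fusion: construct an increasing chain $A=D_0\subseteq D_1\subseteq\cdots$ in $\K$, devoting the $i$-th stage to the lemma so that no embedding $D_{i+1}\hookrightarrow U$ restricts to $f_i$ on $A$, and interleaving steps to guarantee that the limit is not finitely generated. The constraint imposed at stage $i$ persists under passage to larger structures, so $N=\bigcup_n D_n$ lies in $\C$ and admits no embedding into $U$ at all: any such embedding would send $A\subseteq N$ to some copy $f_i$ and hence restrict to a forbidden embedding of $D_{i+1}$. As $U$ was arbitrary, $\C$ has no universal element. The only delicate point is the bookkeeping in the fusion, which is routine once the single-step lemma is in place; that lemma, which converts a would-be failure of diagonalization directly into the amalgam witnessing WAP, is the genuine content of this direction.

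For the converse I would exhibit an approximable class $\C=\lim\K$ whose age $\K$ is a weak \Fraisse{} class of finite graphs --- so that $\C$ has a generic element by the first part --- yet which has no countable universal element. The generic side reduces to verifying JEP together with a weak amalgamation witness for each finite graph in $\K$, and this is exactly the mechanism behind the advertised new examples of weak \Fraisse{} classes. The harder and principal task of Part~2 is the non-universality: I expect to refute a universal element by a counting/diagonalization argument showing that any single countable $U\in\C$ can realize only a restricted supply of the relevant local configurations over a fixed finite subgraph, whereas members of $\C$ collectively force unboundedly many such configurations --- the same flavour of obstruction that drives the $C_4$-free theorem later in the paper. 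The main obstacle here is to arrange these two requirements simultaneously, since the weak amalgamation that guarantees genericity must coexist with the rigidity that blocks a universal object.
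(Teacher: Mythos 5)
Your treatment of the first implication is correct and follows the paper's overall strategy --- both reduce to showing that a universal element forces WAP for $\K=\mathrm{Age}(\C)$, and then invoke the characterization of genericity via weak \Fraisse{} classes (\cite{K}, \cite{KT}) --- but your execution of the contrapositive is genuinely different. The paper (Theorem \ref{T:11}) builds a binary tree $(A_t)_{t\in 2^{<\mathbb{N}}}$ of pairwise non-amalgamating extensions of $A$, obtains $2^{\aleph_0}$ many limits $N_\alpha\in\C$, and applies a pigeonhole to the countably many embeddings of the finitely generated $A$ into a putative universal $U$ to find $\alpha\neq\beta$ whose copies of $A$ in $U$ coincide; the substructure of $U$ generated by the images of the witnessing $B,C$ is then an amalgam over $A$ lying in $\K$ by HP, a contradiction. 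You instead fix an arbitrary candidate $U$, enumerate the countably many embeddings $f_i\colon A\to U$, and diagonalize via your single-step lemma. That lemma is correct, and its proof is precisely the same ``amalgamate inside $U$'' trick that closes the paper's argument; the persistence of the stage-$i$ constraint under extension is immediate since restrictions of embeddings are embeddings, and the remaining bookkeeping (working with chains of embeddings rather than literal inclusions, and interleaving proper extensions via JEP to keep the union non-finitely-generated, which is available since $\K$ contains structures of unbounded size once some $U\in\C$ exists) is routine, as you say. One substantive difference: the paper's tree-plus-pigeonhole argument adapts immediately to the strengthening in Remark \ref{R:1} (no family of fewer than $2^{\aleph_0}$ structures can be jointly universal when WAP fails), whereas your one-$U$-at-a-time diagonalization handles at most countably many candidate universal structures simultaneously.

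For the second assertion of the theorem, however, your proposal has a genuine gap: you offer a plan, not a proof. The statement requires exhibiting a specific approximable class admitting a generic but no universal element, and you never name such a class, verify WAP for it, or prove non-universality --- indeed you explicitly defer both (``I expect to refute \dots'', ``the main obstacle is to arrange these two requirements simultaneously''). The paper resolves exactly this tension concretely with $\mathcal{C}(H)$, where $H=Wd(3,3)$ is the windmill of three triangles sharing a vertex: non-universality is imported from \cite{Ko} (so no counting argument needs to be invented), and the actual work is the WAP verification --- given $A$, one closes off to $\widehat{A}$ so that every vertex of $A$ carries a (non-induced) bowtie and all realizable triangle-completions over its edges are already present in $\widehat{A}$ (Claims \ref{Claim1} and \ref{Claim2}), after which the free amalgam $B\sqcup_A C$ of any two extensions of $\widehat{A}$ is shown to be $H$-free by a case analysis that reroutes a hypothetical copy of $H$ entirely into one side. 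Nothing in your sketch engages with this construction or any substitute for it, so as it stands the ``converse is not true'' half of Theorem \ref{T:1} is unproven in your proposal.
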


%From a certain point of view, generic structures can be thought of  as analogues of atomic models for first-order theories, and universal structures as analogues of saturated models. From this point of view, the first statement of the above theorem is the analogue of the classical theorem of model theorem that if a complete theory has a saturated model, it has an atomic model.  

As a consequence of Theorem \ref{T:1}, we get several new examples of approximable classes of countable graphs which  admit a generic element. Notice that,  by the Lachlan-Woodrow classification theorem \cite{Lachlan}, the \Fraisse{} construction---in its original form---provides  very few examples of approximable classes of graphs which admit a generic element. While we now know that a certain weak version of the \Fraisse{} construction is essentially the only way to produce generic elements~\cite{KKgames,K, KT, DiLiberti}, very few ``weak \Fraisse{} classes" of graphs have been explicitly constructed \cite{WAP}.  On the other hand,
there is a vast literature  on approximable classes  $\mathcal{C}$ of graphs which admit a universal element $U\in\mathcal{C}$;  e.g., \cite{Ko,KMP, KP, DHV,CT,CSS}. By Theorem \ref{T:1}, these classes also admit a generic element $G\in\mathcal{C}$.  We collect some  examples in Section \ref{S:E}.

Finally, there are several approximable classes $\mathcal{C}$ for which it has been established that they do not admit a universal element. A natural question stemming from Theorem \ref{T:1}  is whether  results about non-existence of universal elements can be  strengthened to non-existence of generic elements. Here we strengthen  a result of  Hajnal and Pach  on the non-existence of a universal countable   $C_4$-free graph  \cite{HP}. Recall that  a graph $A$ is {\bf $C_4$-free} if there exists no injective homomorphism from $C_4$ to $A$, or equivalently, if $A$ does not contain a simple $4$-cycle.  We show the following:

\begin{theorem}\label{T:2}
The class of all countable $C_4$-free graphs admits no generic element.
\end{theorem}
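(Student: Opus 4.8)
The plan is to prove Theorem~\ref{T:2} by contraposition to genericity via Theorem~\ref{T:1}: since Hajnal and Pach established that the class of countable $C_4$-free graphs admits no universal element, the contrapositive direction we would like is that ``no universal implies no generic,'' but Theorem~\ref{T:1} only gives ``universal implies generic.'' Hence that route is \emph{not} available, and the absence of a universal element does not formally yield the absence of a generic one. I would therefore argue directly that no generic $C_4$-free graph exists, by showing that the generic-defining comeager condition fails: I would produce two incompatible ``local configurations'' each of which is forced to appear (on a comeager set) in any candidate generic, yet which cannot coexist in a single graph, or alternatively show that the isomorphism class of any fixed countable $C_4$-free graph is meager. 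Let me think about what I'd actually do.

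\begin{proof}[Proof strategy]
First I would recall the standard Banach--Mazur / weak amalgamation characterization of genericity: an approximable class $\C$ admits a generic element if and only if $\mathrm{Age}(\C)$ has the \textbf{weak amalgamation property} (WAP) together with JEP (see \cite{KKgames,K,KT,DiLiberti}). Thus it suffices to show that the class $\K$ of finite $C_4$-free graphs fails WAP. Recall that WAP asks: for every $A\in\K$ there is an extension $A\subseteq B\in\K$ (the ``zig-zag'' or amalgamation base over $A$) such that any two extensions $B\subseteq C_1$, $B\subseteq C_2$ in $\K$ can be amalgamated over $B$ into some $D\in\K$. The plan is to exhibit a finite $C_4$-free graph $A$ witnessing the failure: for \emph{every} choice of $B\supseteq A$, I would construct two $C_4$-free extensions $C_1,C_2$ of $B$ whose amalgam over $B$ is forced to contain a $4$-cycle.

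The key mechanism is the $C_4$-free obstruction to amalgamation. If two distinct vertices $u,v$ in $B$ have a common neighbor, they can have no \emph{second} common neighbor without creating a $C_4$. So I would exploit this by attaching, in $C_1$, a new vertex $x$ adjacent to some pair $u,v\in B$, and in $C_2$ a new vertex $y$ adjacent to that same pair $u,v$; any amalgam identifying $B$ must then make $x,y$ two common neighbors of $u,v$ unless it identifies $x$ with $y$, and by choosing the remaining adjacencies of $x$ and $y$ (e.g.\ giving $x$ and $y$ distinct further neighbors in $B$, or distinguishing them by a pendant structure) I can block the identification $x=y$ while keeping each $C_i$ individually $C_4$-free. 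The heart of the argument is to verify that no finite $B$ can ``pre-absorb'' all such conflicts: given any candidate base $B$, because $B$ is finite and $C_4$-free it has bounded codegree $1$ for adjacent-through-common-neighbor pairs, so there are always fresh non-adjacent pairs $u,v$ in $B$ (or pairs with no common neighbor in $B$) onto which the conflicting one-point extensions can be glued. This is the step I expect to be the main obstacle: ensuring uniformly, across all $B\supseteq A$, that the two one-point extensions $C_1,C_2$ are each genuinely $C_4$-free and that their amalgam is genuinely forced to create a $4$-cycle, requires a careful choice of the attaching pair and of the distinguishing gadget, and a short case analysis ruling out every legal amalgamation map.

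Finally I would assemble these pieces: taking $A$ to be, say, a single edge or even the empty graph, the argument above shows WAP fails at $A$, hence $\K=\mathrm{Age}(\C)$ has no weak amalgamation base, hence by the characterization of genericity the class $\C$ of countable $C_4$-free graphs has no generic element. I would remark that this strictly strengthens Hajnal--Pach, since a generic element, when it exists, is in particular universal, so non-existence of a generic element is a weaker conclusion to obtain only when universality already fails---but here we obtain the stronger structural statement that genericity fails for the \emph{same} reason universality does, namely the rigidity of common neighborhoods in $C_4$-free graphs. The main technical care goes into the combinatorial lemma that conflicting one-point extensions over any finite base cannot be amalgamated.
\end{proof}
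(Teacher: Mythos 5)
There is a genuine gap, and it sits exactly at the step you flagged as ``the main obstacle.'' You have mis-stated the weak amalgamation property: WAP does \emph{not} ask that the two extensions $C_1,C_2$ of the base $B\supseteq A$ amalgamate \emph{over $B$} (that is the cofinal amalgamation property); it only asks for embeddings $f'\colon C_1\to D$, $g'\colon C_2\to D$ that agree \emph{over $A$}, i.e.\ $f'\circ f\circ i=g'\circ g\circ i$. So to refute WAP you must defeat amalgamation over $A$, and this destroys your construction: your conflicting one-point extensions are attached at a pair $u,v\in B$ (plus distinguishing gadgets in $B$), but the amalgamating maps are free to send the copy of $u,v$ coming from $C_1$ and the copy coming from $C_2$ to \emph{different} vertices of $D$, so no two common neighbours of a single pair are ever forced and no $C_4$ arises. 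The paper's proof supplies precisely the missing mechanism: it introduces vertices \emph{determined over $A$} (the closure of $A$ under adding vertices with two neighbours already in the set), whose images under any two embeddings agreeing on $A$ must coincide --- because two distinct images would themselves form a $4$-cycle (Claim \ref{C:3}). The incompatible gadgets (an edge between two determined vertices $s,t$ in $B$, versus a path of length $3$ between them in $C$) are then attached only at determined vertices. Making this work requires a second ingredient you have no substitute for: one must show the determined part can be arranged to contain two determined vertices at distance $3$, and the paper gets this from nontrivial structure theory of $C_4$-free graphs of diameter $2$ (Proposition \ref{prop:Blokhuis}, via Blokhuis--Brouwer \cite{BB} and Deutsch--Fisher \cite{DF}). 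This is exactly why, as the introduction stresses, ruling out a generic element is genuinely harder than ruling out a universal one.

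Two further concrete problems. First, your proposed base $A$ (``a single edge or even the empty graph'') provably cannot witness failure of WAP: for $A=\emptyset$ any two structures amalgamate over $A$ by disjoint union; and for $A$ a single edge $\{u,v\}$ one may take $\widehat{A}$ to be a triangle $\{u,v,w\}$, and then no vertex outside the triangle of any $C_4$-free extension can have two neighbours in it (that would give a second common neighbour, hence a $C_4$), so the free amalgam of any two extensions glued along the triangle is again $C_4$-free; thus WAP holds at such $A$. The paper instead takes $A=C_5$, whose presence both blocks dominating vertices in the determined part and seeds a determined set that can be grown. Second, your closing remark that ``a generic element, when it exists, is in particular universal'' is false --- see Example \ref{E:1} and the second half of Theorem \ref{T:1}; the correct logic is that Theorem \ref{T:1} (universal $\Rightarrow$ generic) makes non-existence of a generic the \emph{stronger} conclusion, which is what makes Theorem \ref{T:2} a strengthening of Hajnal--Pach.
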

By Theorem  \ref{T:1},  establishing  that an approximable class $\mathcal{C}$ does not admit a generic element  is a priori more difficult than establishing that $\mathcal{C}$ does not admit a universal element. The proof of Theorem \ref{T:2} demonstrates this in practice since, unlike with the arguments from \cite{HP}, its proof heavily relies on structure theorems for strongly regular graphs and $C_4$-free graphs of diameter $2$ from \cite{BB,DF}.

\subsection*{Acknowledgments} 
We would like to thank A. Kruckman and W. Kubi\'s{} for their valuable feedback on an earlier draft of this paper. We would also like to thank the anonymous referees whose  comments significantly improved the exposition of the paper. 
After the completion of this paper we were informed by  W.Kubi\'s that Theorem \ref{T:1} has independently been established in  \cite[Section 6]{KKgames}; see Remark \ref{R:1} below.  
The argument we provide here avoids the game-theoretic formalism of  \cite{KKgames} and the counterexample we supply for the  second statement of Theorem \ref{T:1} is  different and interesting on its own right since it  relates to the theory of bowtie-free  graphs.

\section{Definitions and Examples}\label{S:E}

We will be using the   usual model theoretic  notions of    $\mathcal{L}$-structures, embeddings, etc.; see e.g., \cite{TZ}.  In particular, if $A$ is a graph then we will denote by $\mathrm{dom}(A)$ the underlying set of all of its vertices and we will write $(a,a')\in R^A$ or $A\models R(a,a')$ to declare that there is an edge between $a,a'\in \mathrm{dom}(A)$ in $A$. Since the examples we consider  deal  exclusively with symmetric graphs, we will often view edges $(a,a')\in R^A$ as unordered pairs $\{a,a'\}$. An {\bf embedding} $f\colon A \to B$ of graphs is  any injective function $f\colon \mathrm{dom}(A)\to \mathrm{dom}(B)$ with $(a,a')\in R^A \iff (f(a),f(a'))\in R^B$. An injective homomorphism of graphs, or a {\bf weak embedding}, is  any injective function $f\colon \mathrm{dom}(A)\to \mathrm{dom}(B)$ with $(a,a')\in R^A \implies (f(a),f(a'))\in R^B$.
 We warn the reader that, in the graph theoretic terminology from \cite{Ko,KMP, KP, DHV,CT,CSS}, the term embedding is used for what we call here  weak embedding.

Below we will often use the notation $N\leq M$ to indicate that the $\mathcal{L}$-structure $N$ is a substructure of the $\mathcal{L}$-structure $M$. That is, to indicate that $\mathrm{dom}(N)\subseteq \mathrm{dom}(M)$ and that the inclusion $\mathrm{id}\colon \mathrm{dom}(N)\to \mathrm{dom}(M)$  induces an embedding of $N$ into $M$.  
Let $\mathcal{K}$ be a collection of $\mathcal{L}$-structures. We say that $\mathcal{K}$ has the  {\bf Joint  Embedding Property} (JEP)
 if for every pair  $A,B\in \mathcal{K}$ there is $C\in \mathcal{K}$ so that  $A,B$ embed in $C$. Let  $f\colon A\to B$ and $g\colon A\to C$ be embeddings with $A,B,C\in\mathcal{K}$. If there are embeddings $f'\colon  B\to D$ and $g'\colon C\to D$, with $D\in\mathcal{K}$, so that $f'\circ f = g' \circ g$ then we say that {\bf $f$ and $g$ amalgamate over $A$} (in $\mathcal{K}$). Whenever $f,g$ are unambiguously understood, we will  just say  that {\bf $B$ and $C$ amalgamate over $A$} and we will call $D$ an {\bf amalgam} of $B$ and $C$ over $A$. We say that $\mathcal{K}$ has {\bf  Amalgamation Property  (AP)} if for every pair of embeddings  $f\colon A\to B$ and $g\colon A\to C$, with $A,B,C\in\mathcal{K}$, we have that $f,g$  amalgamate over $A$. We say that  $\mathcal{K}$ has the {\bf Cofinal Amalgamation Property (CAP)} if for every $A\in\mathcal{K}$ there is an embedding $i\colon A\to\widehat{A}$ so that for every two embeddings $f\colon\widehat{A}\to B$ and $g\colon\widehat{ A}\to C$ we have that 
$f$ and $g$ amalgamate over $\widehat{A}$. We say that  $\mathcal{K}$ has the {\bf Weak Amalgamation Property (WAP)} if for all $A\in\mathcal{K}$ there is an embedding $i\colon A\to\widehat{A}$ so that for every two embeddings $f\colon\widehat{A}\to B$ and $g\colon\widehat{ A}\to C$, the maps 
$f \circ i$ and $g \circ i$ amalgamate over $A$.
Notice that:
\[\mathcal{K} \text{ has  AP}  \implies \mathcal{K} \text{ has  CAP}  \implies \mathcal{K} \text{ has  WAP}  \]

Assume that $\mathcal{K}$ is an approximating class, as defined in the  introduction.
Let $\mathcal{C}=\mathrm{lim}(\mathcal{K})$ be the associated approximable class and notice that, by EP, $\mathcal{C}\neq \emptyset$.  As usual, we view  $\mathcal{C}$  as a topological space  with basic open sets  of the form:
\begin{align*}
\mathcal{O}_A&=\{N\in  \mathcal{C} \colon 
\text{ the restriction of } N \text{ to } \mathrm{dom}(A) \text{ is equal to } A \}\\
&=\{N\in  \mathcal{C} \colon  A\leq N \},
\end{align*}
where $A$ ranges over all elements of $\mathcal{K}$ with  $\mathrm{dom}(A)\subseteq \mathbb{N}$; see e.g., \cite{KT}. 
 While $\mathcal{C}$ fails in general to be a Polish space (see  \cite[Proposition 2.3]{KT}), it turns out that  $\mathcal{C}$ is always a Baire space. That is, the intersection of countably many comeager subsets of $\mathcal{C}$ forms a dense subset of $\mathcal{C}$; see \cite[Definition 8.2 and Section 8.A]{Kechris}. 
 
\begin{proposition}
If $\mathcal{C}$ is an approximable class, then $\mathcal{C}$  is a Baire space.
\end{proposition}
 \begin{proof}
 By  \cite[Theorem 8.4]{Kechris} is suffices to show that $\mathcal{C}$ admits a complete metric compatible with the topology on $\mathcal{C}$. For any   $N,M\in \mathcal{C}$ let $\delta(N,M)\subseteq \mathbb{N}$ be the set of all $k\in\mathbb{N}$ so that the identity map $\mathrm{id}_k\colon \{0,1,\ldots,k-1\}\to \{0,1,\ldots,k-1\}$ extends to an isomorphism between the substructures $\langle  \{0,1,\ldots,k-1\} \rangle_M$  of  $M$ and $\langle  \{0,1,\ldots,k-1\} \rangle_N$ of  $N$ generated by $\{0,\ldots, k-1\}$. We set
\[d_{\mathcal{C}}(M,N):=\mathrm{inf}\{1/2^k\colon k\in \delta(N,M)\}.\]
Here we adopt the convention that the empty structure is an $\mathcal{L}$-structure and hence  $0\in\delta(N,M)$ for all $M,N\in\mathcal{C}$.
It is easy to see that  $d_{\mathcal{C}}$ is a metric on $\mathcal{C}$ that is compatible with the topology on $\mathcal{C}$.  To see that   $d_{\mathcal{C}}$ is complete let $(M_n)_{n\in\mathbb{N}}$ be a $d_{\mathcal{C}}$-Cauchy sequence. By passing to a subsequence we may assume that for all $n\in \mathbb{N}$ and every $m\geq n$, the substructure of $M_n$ generated by $\{0,\ldots, n-1\}$ is equal to the substructure of $M_m$ generated by $\{0,\ldots, n-1\}$. Call this common substructure $A_n$ and let $M$ be the $\mathcal{L}$-structure that is the union of the increasing sequence
\[A_0\leq  A_1 \leq A_2\leq \cdots \leq A_n\leq \cdots \]
Then $M\in\mathcal{C}$. Indeed, it is clear that $\mathrm{dom}(M)=\mathbb{N}$. Moreover, if $A$ is some finitely generated structure with $A\leq M$ then there is some $n\in\mathbb{N}$ so that $\mathrm{dom}(A_n)$ contains the generators of $A$ and hence  $A\leq A_n$.  By HP we have that $A\in \mathcal{K}$. Moreover, since $M_n$ is not finitely generated we have that $\mathrm{dom}(A)\neq \mathbb{N}$. Hence $M$ is not finitely generated either.  It is finally clear that $M_n\to M$.
 \end{proof}
 
Since $\C$ is a non-empty Baire space, the $\sigma$-filter  of all comeager subsets of $\mathcal{C}$ is non-trivial, i.e., the empty set is not comeager in $\C$; see \cite[Section 8.B]{Kechris}. As a consequence, the topology of $\mathcal{C}$ carries a useful  intrinsic notion of ``largeness" which allows one to talk about properties of the ``generic" structure in the same fashion that a probability measure on $\mathcal{C}$ would allow one to talk about properties of the``random" structure  of $\mathcal{C}$. The following definitions make this precise.
A property $\mathcal{P}$ of structures of $\mathcal{C}$ is a {\bf  generic property} if the set of all $N\in \mathcal{C}$  satisfying $\mathcal{P}$  forms a comeager set. We say that $\mathcal{C}$ {\bf has a  generic element} if there is $G\in\mathcal{C}$ so that
\[\{N\in \mathcal{C} \; \colon  N \text{ is isomorphic to } G \}\]
is a comeager subset of $\mathcal{C}$. In this case, we say that $G$ {\bf is a generic element of} $\mathcal{C}$. 

\begin{fact}\label{F:2}
If $\mathcal{C}$ is approximable then there exists at most one generic element of $\mathcal{C}$ up to isomorphism. 
\end{fact}
\begin{proof}
Let $G_1,G_2$ both be generic elements of $\mathcal{C}$. 
Since $\mathcal{C}$ is a Baire space, the set
 \[\mathcal{D}:=\{N \in \mathcal{C} \colon  N \text{ is isomorphic to } G_1 \} \cap \{N \in \mathcal{C} \colon  N \text{ is isomorphic to } G_2 \} \]
is dense in $\mathcal{C}$ and hence non-empty---recall that by EP we have that $\mathcal{C}\neq\emptyset$. Hence, $G_1$ and $G_2$ are isomorphic, since they are both isomorphic to a structure in $\mathcal{D}$. 
\end{proof}

Standard examples of  approximable classes $\mathcal{C}$ which admit a structure that is both generic and universal are constructed via the \Fraisse{} method.
We say that an approximating class $\mathcal{K}$ is a {\bf \Fraisse{} class} if $\mathcal{K}$ additionally  satisfies  JEP and  AP. In this case, the class $\mathcal{C}=\mathrm{lim}(\mathcal{K})$ contains a canonical structure $M\in\mathcal{C}$, known as the {\bf \Fraisse{} limit of $\mathcal{K}$}, which is both a universal and a generic element of $\mathcal{C}$; see \cite{TZ}.  For example, the class $\mathcal{K}_{\mathrm{graphs}}$ of all finite graphs forms  \Fraisse{} class whose  \Fraisse{} limit is the Rado graph $G_{R}$: the unique up to isomorphism countable graph  so that for every two finite disjoint  $F,F'\subseteq \mathrm{dom}(G_{R})$  there is a vertex $v\in\mathrm{dom}(G_{R})$ which is connected by an edge with every vertex in $F$ but to no vertex in $F'$.

The full strength of the amalgamation property for $\mathcal{K}$ is not necessary for the existence of a generic element of $\mathcal{C}=\mathrm{lim}(\mathcal{K})$.  Indeed, if $\mathcal{K}$ is a just {\bf weak \Fraisse{} class}, i.e.,  if $\mathcal{K}$ is an approximating class and satisfies  JEP  and WAP, then a variant of the \Fraisse{} construction shows that $\mathcal{C}$ still admits a generic element. In fact, as it turns out, 
weak \Fraisse{} classes  are in bijective correspondence to approximable classes  that  admit a generic element; see \cite[Theorem 4.2.2]{K} or \cite[Theorem 2.5]{KT}.  
 This was originally proved for certain special classes of partial automorphisms by Ivanov~\cite{Ivanov} 
and independently by Kechris and Rosendal~\cite{KR}. It was later extended to more general classes in \cite{K} and independently by several other authors in~\cite{KKgames, KT, DiLiberti}. Here we follow \cite{KT}, whose exposition is closer in spirit to this paper\footnote{Notice that in \cite{KT} the term ``$\mathcal{K}$ is unbounded" is used for what we call here $``\mathcal{K}$ has EP".}. 

\begin{fact}\label{F:1}  \text{\cite[Theorem 2.5(4)-(5)]{KT}}
Let $\mathcal{K}$ be an approximating class and  $\mathcal{C}=\mathrm{lim}(\mathcal{K})$ be the associated approximable class. Then $\mathcal{C}$ admits a generic element if and only if $\mathcal{K}$ has JEP and WAP.
\end{fact}

The following example demonstrates that, in the absence of the full amalgamation property for $\mathcal{K}$, the associated approximable class $\mathcal{C}$ may admit a universal element $U$ which is not generic, as well as a generic element $G$ which is not universal.

\begin{example}\label{E:1}
Let $\mathcal{K}_{\mathrm{linear}}$ be the class of all finite graphs which are disjoint unions of linear graphs.  By a linear graph we mean any graph which is isomorphic to the graph on domain $\{0,\ldots,n-1\}$ with $(k R \ell)\iff (|k-\ell|=1)$, for some $n\in\mathbb{N}$. It is easy to see that  $\mathcal{K}_{\mathrm{linear}}$  has  CAP and hence, by Fact \ref{F:1},  $\mathcal{C}_{\mathrm{linear}} := \lim (\mathcal{K}_{\mathrm{linear}})$ admits a generic element $G$. In a personal communication, A. Kruckman pointed out that while $G$ is generic, it  is not universal for  $\mathcal{C}_{\mathrm{linear}}$. Indeed, $G$  is isomorphic to the bi-infinite linear graph i.e., the
 graph whose domain is $\mathbb{Z}$ and for $k,\ell\in\mathbb{Z}$ we have $(k R \ell)\iff (|k-\ell|=1)$;  see 
 \cite[Example 4.2.7]{K}. In particular,  $G$ does not embed any  extension of $G$ from $\mathcal{C}_{\mathrm{linear}}$, such as the graph $G\sqcup G$ that is the disjoint union of two copies of $G$. That being said,  $\mathcal{C}_{\mathrm{linear}}$ admits a universal element. Namely,  the graph $\bigsqcup_{n\in \aleph_0} G$ that is the disjoint union of $\aleph_0$-many copies of $G$. 
 
\end{example}

As an application of  Theorem \ref{T:1},  we close this section by demonstrating that many well-studied classes $\mathcal{K}$ of finite graphs are actually weak \Fraisse{} classes. Notice that, by the Lachlan-Woodrow classification theorem \cite{Lachlan}, the only 
 \Fraisse{} classes of finite graphs are: the class of  all graphs, classes consisting of  disjoint unions of complete graphs; the class of all $K_n$-free graphs;  complements of the last two classes.

Let $\mathcal{F}$ be a collection of finite graphs. We say that a graph {\bf $N$ omits graphs from $\mathcal{F}$}, if there exists no injective homomorphism $f\colon A\to N$ with $A\in \mathcal{F}$. Let $\mathcal{K}(\mathcal{F})$ and $\mathcal{C}(\mathcal{F})$ be all finite and all countable, respectively, graphs which omit graphs from $\mathcal{F}$. Below,  $C_n$ and $K_n$ denote  the cycle and the complete graph of size $n$, respectively.

\begin{example}\label{E:2}
The  class $\mathcal{C}(\mathcal{F})$ admits a universal element if $\mathcal{F}$ consists of: 
\begin{enumerate}\setlength\itemsep{5pt}\vspace{2pt}
\item the singleton $\{\bowtie\}$, where   $\bowtie$ is  the bowtie graph; see \cite{Ko}.
\item  the infinite collection  $\{C_n,C_{n+1},\ldots\}$, for some $n>0$; see \cite{KMP}.
\item the finite collection  $\{C_3,C_5,\ldots,C_{2n+1}\}$, for some  $n\geq 1$; see \cite{KMP} and  \cite{Ko}.
\item  the singleton $\{P_n\}$, where $P_n$ is the  length $n$ path, for some $n>0$; see  \cite{KMP}.
\item the singleton $\{N\}$,  where $N$ is a near path, i.e.,  a finite tree consisting of a path with at most one additional edge adjoined; see \cite{CT}.
\item the collection $\mathrm{top}(K_n)$ for some fixed $n\leq 4$,  where $\mathrm{top}(K_n)$ consists of all finite graphs which are topologically equivalent to $K_n$, i.e., all finite graphs which are  simplicial subdivisions of $K_n$; see \cite{DHV} and \cite{KP}.
\item any finite collection  of finite   connected graphs  that is  homomorphism-closed, i.e., if $f\colon B\to A$ is a surjective graph homomorphism with with $B\in \mathcal{F}$ then there is a $C\in \mathcal{F}$ which weakly embeds in $A$; see \cite{CSS}.
\end{enumerate} 
\end{example}

\begin{corollary}\label{Cor:1}
$\mathcal{K}(\mathcal{F})$ is a weak \Fraisse{} class, for all
$\mathcal{F}$ from Example \ref{E:2}.
\end{corollary}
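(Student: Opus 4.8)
The plan is to deduce the corollary from Theorem~\ref{T:1} together with the characterization of generic elements by the weak amalgamation property (\cite[Theorem 4.2.2]{K} or \cite[Theorem 2.5]{KT}). Fix one of the families $\mathcal{F}$ from Example~\ref{E:2}. First I would check that $\mathcal{C}(\mathcal{F})=\lim\mathcal{K}(\mathcal{F})$ is genuinely an approximable class with $\mathrm{Age}(\mathcal{C}(\mathcal{F}))=\mathcal{K}(\mathcal{F})$. Since the language of graphs has no function symbols, a finitely generated substructure is just a finite induced subgraph, so $\mathcal{K}(\mathcal{F})$ consists exactly of the finite graphs omitting $\mathcal{F}$ and $\mathcal{C}(\mathcal{F})$ of the infinite countable ones. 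The Hereditary Property is immediate, as omitting $\mathcal{F}$ passes to substructures: an injective homomorphism from a member of $\mathcal{F}$ into a substructure composes with the inclusion to give one into the ambient graph. For the Joint Embedding Property I would use that every graph appearing in the families of Example~\ref{E:2} is connected; this has to be noted case by case (the bowtie, the cycles $C_n$, the paths $P_n$, the near-paths, the subdivisions of $K_n$, and the connected collection in~(7)). Connectedness guarantees that no forbidden graph can meet two components of a disjoint union, so $A\sqcup B\in\mathcal{K}(\mathcal{F})$ whenever $A,B\in\mathcal{K}(\mathcal{F})$, and the inclusions witness JEP.

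With the framework in place the argument is a short uniform chain. By the references \cite{Ko,KMP,KP,DHV,CT,CSS} collected in Example~\ref{E:2}, each $\mathcal{C}(\mathcal{F})$ admits a universal element $U$. Theorem~\ref{T:1} then produces a generic element $G\in\mathcal{C}(\mathcal{F})$, and the cited characterization of genericity shows that $\mathcal{K}(\mathcal{F})$ has the weak amalgamation property. Together with HP and JEP this is precisely the assertion that $\mathcal{K}(\mathcal{F})$ is a weak \Fraisse{} class, and since everything after the set-up is independent of the particular $\mathcal{F}$, all cases of the corollary follow at once.

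The delicate point, and the one I expect to be the main obstacle, is the invocation of universality, because of the clash of conventions flagged in Section~\ref{S:E}. The references \cite{Ko,KMP,KP,DHV,CT,CSS} produce graphs that are universal for \emph{weak} embeddings, that is, into which every member of the class embeds merely as a subgraph; Theorem~\ref{T:1}, on the other hand, requires a universal element in the sense of this paper, into which every member embeds as an \emph{induced} subgraph. An induced-universal graph is weakly universal, but not conversely, so I would have to verify in each case that the construction actually realizes members as induced subgraphs. I expect this to be readable off the constructions, which are amalgamation-based; and for the passage from connected members to arbitrary ones I would exploit that $\mathcal{C}(\mathcal{F})$ is closed under induced subgraphs and, by connectedness of $\mathcal{F}$, under disjoint unions, so that taking countably many disjoint induced copies of an induced-universal graph for the connected members yields one for all of $\mathcal{C}(\mathcal{F})$. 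Securing this induced realization is the heart of the matter; the remaining steps are formal.
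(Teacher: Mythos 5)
Your proposal is correct and takes essentially the same approach as the paper, which offers no separate argument for Corollary~\ref{Cor:1}: there the statement is obtained immediately by combining the universality results quoted in Example~\ref{E:2} (whose wording ``does not admit a universal element'' is evidently a slip for ``admits a universal element,'' as the introduction makes clear) with Theorem~\ref{T:1} and the characterization of genericity by WAP. The two points you treat with extra care---checking HP/JEP via connectedness of the forbidden graphs, and the mismatch between the induced-subgraph universality required by Theorem~\ref{T:1} and the weak-embedding universality stated in \cite{Ko,KMP,KP,DHV,CT,CSS}---are precisely the points the paper leaves implicit, the latter being addressed there only by the terminological warning in Section~\ref{S:E}; like the paper, you defer that case-by-case verification to the cited constructions rather than carrying it out.
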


The fact that $\mathcal{K}(\{\bowtie\})$ has WAP was already known. In fact, in 
\cite{HN} it is shown that  $\mathcal{K}(\{\bowtie\})$ has the CAP; see also \cite{S}.  It turns out that the associated generic bowtie-free countable graph has several curious properties, both from a Ramsey-theoretic as well as from  a model-theoretic standpoint; see \cite{HN}.

We should point out that, even when it comes to  general classes of $\mathcal{L}$-structures, we have very few (and rather artificial) examples of weak \Fraisse{} classes $\mathcal{K}$ which do not already satisfy the cofinal amalgamation property; see \cite{WAP}.  It would be interesting if any of the examples (2)-(7) above
could provide more natural examples.

\begin{question} Does  $\mathcal{K}(\mathcal{F})$ fail CAP for some
 $\mathcal{F}$ from (2)-(7) in Example \ref{E:2}?
\end{question}

%``Our arguments can be converted into amalgamation arguments in principle, but not in any very explicit way."\cite{CSS}

\section{Universality vs Genericity}

We first show that the  existence of a universal $U\in\mathcal{C}$  implies the existence of a generic element $G\in\mathcal{C}$. By Fact \ref{F:1} it suffices to show the following:

\begin{theorem}\label{T:11}
Let $\mathcal{C}$ be an approximable class of $\mathcal{L}$-structures and let $\mathcal{K}=\mathrm{Age}(\mathcal{C})$. If $\mathcal{C}$  admits a universal element then $\mathcal{K}$ has JEP and WAP.
\end{theorem}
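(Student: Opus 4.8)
The plan is to find, for each $A\in\K$, a finite substructure $\widehat A\le U$ that contains a copy of $A$ and is \emph{good} in the following sense: for every $B\in\K$ and every embedding $f\colon\widehat A\to B$ there is an embedding $\beta\colon B\to U$ with $\beta\circ f$ equal to the inclusion $\widehat A\hookrightarrow U$. (Here I use that every member of $\K=\mathrm{Age}(\C)$ embeds into $U$: it embeds into some $N\in\C$, which embeds into the universal $U$.) Granting a good $\widehat A$, the weak amalgamation property at $A$ follows at once, with $i\colon A\to\widehat A$ the inclusion: given $f\colon\widehat A\to B$ and $g\colon\widehat A\to C$, pick $\beta\colon B\to U$ and $\gamma\colon C\to U$ with $\beta\circ f=\gamma\circ g=\mathrm{incl}_{\widehat A}$, and let $D\le U$ be the substructure generated by $\beta(B)\cup\gamma(C)$. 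As a finitely generated substructure of $U\in\C$ we have $D\in\K$, the corestrictions $f'=\beta\colon B\to D$ and $g'=\gamma\colon C\to D$ are embeddings, and $f'\circ f\circ i=g'\circ g\circ i$ since both composites equal the inclusion $A\hookrightarrow U$.

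The crux is the existence of a good $\widehat A$, and this is where universality is used in an essential way. I would argue by contradiction. Assume that no finite substructure of $U$ containing a copy of $A$ is good; then every finite $F\le U$ admitting an embedding of $A$ carries a witness $f_F\colon F\to B_F$ with $B_F\in\K$ for which no embedding $\beta\colon B_F\to U$ satisfies $\beta\circ f_F=\mathrm{incl}_F$. From these local failures I would build a single $N\in\C$ with $A\le N$ admitting \emph{no} embedding into $U$, contradicting universality. Construct $N=\bigcup_s N_s$ as an increasing chain in $\K$ with $N_0=A$. Enumerating all finite partial maps into $U$, at stage $s$ one treats a partial embedding $\theta\colon N_s\to U$: its image $F:=\theta(N_s)$ is finite and contains the copy $\theta(A)$ of $A$, so the hypothesis provides $B_F$; gluing $B_F$ onto $N_s$ by identifying $N_s$ with $F$ along $\theta$ produces $N_{s+1}\in\K$ to which $\theta$ admits no extension that embeds $N_{s+1}$ into $U$. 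Standard bookkeeping guarantees that the restriction to some $N_s$ of any hypothetical total embedding $N\to U$ is one of the partial embeddings explicitly killed, so no such total embedding exists; interpolating extra generators (using JEP) between stages keeps every $N_s$ in $\K$ while ensuring $N$ is infinite and not finitely generated, so that indeed $N\in\C$.

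I expect the reduction in the first paragraph to be entirely routine, since the amalgamation is carried out inside $U$ and is purely diagrammatic. The main obstacle is the construction in the second paragraph: assembling the scattered ``no retraction'' failures into one globally non-embeddable structure. The points requiring care are the diagonalization bookkeeping, the verification that each $F=\theta(N_s)$ really contains a copy of $A$ (which is why the chain is started at $N_0=A$), and the closure conditions $N_{s+1}\in\K$ (by HP) and $N\in\C$ (every finitely generated substructure lies in some $N_s$, and $N$ is arranged to be infinite and not finitely generated). The conceptual content is that universality is powerful enough to force good—equivalently, weakly absorbing—finite substructures above every $A$, which is exactly the local amalgamation data that the weak amalgamation property records.
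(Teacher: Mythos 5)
Your first paragraph is sound: if a ``good'' finitely generated $\widehat{A}\leq U$ exists over (a copy of) $A$, then amalgamating inside $U$ gives WAP at $A$ exactly as you say (in fact the weaker requirement that $\beta\circ f$ fix only the copy of $A$ pointwise, rather than all of $\widehat{A}$, would already suffice). The genuine gap is in the second paragraph, and it is not a matter of bookkeeping. In your construction, the only partial embeddings you are able to kill at stage $s$ are those defined on the \emph{entire} current structure $N_s$: your gluing produces $N_{s+1}\cong B_F\in\mathcal{K}$ precisely because $\theta$ identifies all of $N_s$ with $F$. To kill at stage $s$ an embedding $\theta\colon N_j\to U$ enumerated earlier ($j<s$), you would have to extend $N_s$ so that $\theta$ no longer extends, i.e.\ amalgamate the witness $B_{\theta(N_j)}$ with $N_s$ over $N_j$ inside $\mathcal{K}$ --- exactly the amalgamation that $\mathcal{K}$ may lack (and free amalgamation is unavailable in general: for instance, $C_4$-free graphs are not closed under it). So per level you remove exactly one node from the tree whose level-$s$ nodes are the embeddings $N_s\to U$ and whose infinite branches are the embeddings $N\to U$. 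Removing one node per level does not make this tree well-founded: nothing in your hypothesis (``no finitely generated substructure of $U$ containing a copy of $A$ is good'') prevents every unkilled embedding of $N_s$ from extending to an embedding of $N_{s+1}$; if that happens, dependent choice yields a coherent sequence $(\eta_s)$ with $\eta_s\neq\theta_s$ for all $s$, whose union is an embedding of $N$ into $U$, and no contradiction with universality is reached. The sentence ``standard bookkeeping guarantees that the restriction to some $N_s$ of any hypothetical total embedding is one of the killed partial embeddings'' is therefore exactly where the proof fails, and it is the whole difficulty, not a routine step.

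The paper's proof is structured to avoid this single-structure diagonalization altogether. It proves the contrapositive: failure of WAP at $A$ gives, over every extension $\widehat{A}$ of $A$, two further extensions that do not amalgamate over $A$, and this is used to build a tree $(A_t)_{t\in 2^{<\mathbb{N}}}$ in which the two successors of each node fail to amalgamate over $A$; the branches give continuum many structures $N_\alpha\in\mathcal{C}$. No single $N_\alpha$ is claimed to omit an embedding into $U$. Instead, if embeddings $f_\alpha\colon N_\alpha\to U$ existed for all $\alpha$, then --- since $A$ is finitely generated and $U$ is countable, so there are only countably many embeddings of $A$ into $U$ --- uncountably many branches would have to agree on where they send $A$; taking two such branches $\alpha\neq\beta$ and their diverging extensions $B\leq N_\alpha$, $C\leq N_\beta$, the finitely generated substructure of $U$ generated by $f_\alpha(B)\cup f_\beta(C)$ would be an amalgam over $A$ of two structures that do not amalgamate, a contradiction. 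This pigeonhole over continuum many structures, rather than a diagonalization against one universal structure, is precisely the idea your approach is missing; any repair of your second paragraph would in effect have to reproduce it.
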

\begin{proof}
Let $A,B\in \mathcal{K}$. Since $\mathcal{K}$ has EP one can easily build  $M,N\in \mathcal{C}$ with $A\leq M$ and $B\leq N$. Indeed using EP we can inductively define strictly increasing sequences:
\begin{align*}
A=A_0\lneq A_1\lneq A_2\lneq\cdots\\
B=B_0\lneq B_1\lneq B_2\lneq\cdots
\end{align*}
of structures in $\mathcal{K}$. Let $M$ and $N$ be the union of the sequences $(A_n)_{n\in\mathbb{N}}$ and $(B_n)_{n\in\mathbb{N}}$ respectively. But then $M$ and $N$ are not finitely generated since if, say $F\subseteq \mathrm{dom}(M)$ is finite, then  $F\subseteq \mathrm{dom}(A_n)$ for some $n\in\mathbb{N}$ and hence $\langle F\rangle_M\leq A_n$. 
By renaming the domains of $M,N$ if necessary, we may assume that  $M,N\in\mathcal{C}$. Since $M,N$ embed in the universal element $U$ we may assume  that $A,B\leq U$. Consider the substructure $C:=\langle\mathrm{dom}(A)\cup\mathrm{dom}(B)\rangle_U$ of $U$ that is generated by   $\mathrm{dom}(A)\cup\mathrm{dom}(B)$. Notice that $C\in \mathcal{K}$ and $A,B\leq C$. Hence, $\mathcal{K}$ has JEP.

We now recall some standard pieces of notation which will be used for showing WAP. As usual we identify $n\in \mathbb{N}$ with $\{0,\ldots,n-1\}$. Let $2^{\mathbb{N}}$ and
$2^{<\mathbb{N}}:=\bigcup_{n\in\mathbb{N}}2^n$ be the sets of all infinite and finite sequences, respectively, with values in $\{0,1\}$. For every $\alpha=(\alpha_0,\alpha_1,\ldots)\in 2^{\mathbb{N}}$ and every $n\in\mathbb{N}$, we denote the  sequence $(\alpha_0,\ldots,\alpha_{n-1})\in 2^{<\mathbb{N}}$ by $\alpha|n$. The concatenation $(s_0,\ldots,s_{n-1},s'_0,\ldots,s'_{m-1})$ of $s=(s_0,\ldots,s_{n-1})$ and $s'=(s'_0,\ldots,s'_{m-1}) \in 2^{<\mathbb{N}}$ is denoted by $s^{\frown}s'$. We will write $s\precneq t$ if $t=s^{\frown}s'$ for some non-empty $s'\in 2^{<\mathbb{N}}$.

Now assume towards contradiction that  $\mathcal{K}$ fails WAP and  fix some $A\in\mathcal{K}$ so that for every embedding $i\colon A\to \widehat{A}$ with $\widehat{A}\in\mathcal{K}$ there are further extensions $B,C$ of $\widehat{A}$ which do not amalgamate over $A$. 
\begin{claim}\label{C:new1}
There is a family $(A_s\colon s\in 2^{<\mathbb{N}})$ with $A_s\in\mathcal{K}$  and  $A_{\emptyset}=A$ so that:
\begin{enumerate}
\item for all $s,t\in 2^{<\mathbb{N}}$ with  $s\precneq t$ we have $A_s\lneq A_t$;
\item for all $s\in 2^{<\mathbb{N}}$ we have that $A_{s^{\frown}(0)}$ and $A_{s^{\frown}(1)}$ do not amalgamate over $A_{\emptyset}$.
\end{enumerate}
\end{claim}
\begin{proof}[Proof of Claim]
Set  $A_{\emptyset}:=A$. Assume by induction that  $A_{s}\in\mathcal{K}$ has been defined for  $s\in 2^{<\mathbb{N}}$, and that $A_{\emptyset}\leq A_s$. Setting $\widehat{A}:=A_s$ and letting $i\colon A\to \widehat{A}$ be the inclusion  $A_{\emptyset}\leq A_s$, we get  $B,C\in\mathcal{K}$ with $\widehat{A}\leq B,C$ so that $B$ and $C$ do not amalgamate over $A$. Set $A_{s^{\frown}(0)}:=B$ and $A_{s^{\frown}(1)}:=C$. This concludes the construction.  It is clear that if $s\precneq t$ then we have $A_s\leq A_t$. In fact we have $A_s\lneq A_t$, since otherwise $A_{s^{\frown}(0)}:=B$ and $A_{s^{\frown}(1)}$ would amalgamate over $A_s$ and hence over $A_{\emptyset}$ as well.
\end{proof}

For every $\alpha\in 2^{\mathbb{N}}$ let $N_{\alpha}$ be the countable structure that is the union of 
\[A_{\alpha|0}\lneq A_{\alpha|1}\lneq \cdots \lneq  A_{\alpha|n}\lneq \cdots\]
Since $A=A_{\emptyset}=A_{\alpha|0}$ we have that $A\leq N_{\alpha}$ for all $\alpha\in 2^{\mathbb{N}}$. Moreover notice that,  since the above sequence is strictly increasing, $N_{\alpha}$ is not finitely generated and hence  $N_{\alpha}$ is isomorphic to some $M_{\alpha}\in \mathcal{C}$ for all $\alpha\in 2^{\mathbb{N}}$. Let $i_{\alpha}\colon A\to M_{\alpha}$ be the embedding that is induced by  postcomposing the inclusion $A\leq N_{\alpha}$ with the  isomorphism $N_{\alpha}\simeq M_{\alpha}$.

To derive a contradiction, let $U$ be the universal element of  $\mathcal{C}$  and  for every  $\alpha\in 2^{\mathbb{N}}$ fix some embedding $f_{\alpha}\colon M_{\alpha}\to U$. Since $U$ is countable and $A$ is finitely generated, there is an uncountable $J\subseteq 2^{\mathbb{N}}$ so that for all $\alpha,\beta\in J$ we have  that \[f_{\alpha} \circ i_{\alpha}=f_{\beta} \circ i_{\beta}.\]
Let $\alpha,\beta\in J$ with $\alpha \neq \beta$. But then, if $n\in \mathbb{N}$ is the largest number with $\alpha|n=\beta|n$, we have that $f_{\alpha} \circ i_{\alpha}=f_{\beta} \circ i_{\beta}$ is in contradiction with Claim \ref{C:new1}(2) for  $s:=\alpha|n$.
\end{proof}

\begin{rem}\label{R:1}
In \cite[Corollary 6.3]{KKgames}, A. Krawczyk and W. Kubi\'s prove the following strengthening of Theorem \ref{T:11}: ``if  there exists a family $\mathcal{U}\subseteq \mathcal{C}$ with $|\mathcal{U}|<2^{\aleph_0}$, so that for all $N\in\mathcal{C}$ there is $U\in\mathcal{U}$ with $N\leq U$, then $\mathcal{K}$ has WAP".
A straightforward adaptation of the above proof can be used to establish the aforementioned strengthening. Indeed, if  $\mathcal{U}$ is universal---as a family---for $\mathcal{C}$, with $|\mathcal{U}|<2^{\aleph_0}$,  then one can first choose some uncountable $I\subseteq 2^{\mathbb{N}}$ and some $U\in\mathcal{U}$ so that $M_{\alpha}$ embeds in $U$, for all $\alpha\in I$. The rest of the proof is verbatim, modulo requiring that $J\subseteq I$.
\end{rem}

Next we show that  the converse statement is not true. That is, there is some approximable class $\mathcal{C}$ which admits a generic element but no universal element.
Let $H$ be the \emph{windmill graph}, i.e., the graph  consisting of $3$ triangles all sharing exactly one common vertex $p$, as shown in the following figure:

 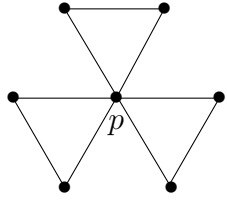
\begin{figure}[ht!]
\begin{tikzpicture}[x=0.52pt,y=0.52pt,yscale=-1,xscale=1]
%uncomment if require: \path (0,152); %set diagram left start at 0, and has height of 152

%Straight Lines [id:da5657476787325215] 
\draw    (0,75)  node (1){$\bullet$} -- (150,75)  node (2){$\bullet$};
%Straight Lines [id:da42519869078815886] 
\draw    (37.50,10)  node (3){$\bullet$} -- (115,140) node (4){$\bullet$} ;
%Straight Lines [id:da9044166137828495] 
\draw    (75,75)  node (5){$\bullet$} -- (37.50,140)  node (6){$\bullet$} ;
%Straight Lines [id:da4241303380717204] 
\draw    (110,10)  node (7){$\bullet$} -- (75,75) ;
%Straight Lines [id:da8304593365268309] 
\draw    (37.50,10) -- (110,10) ;
%Straight Lines [id:da877119935282842] 
\draw    (0,75) -- (37.50,140) ;
%Straight Lines [id:da293407951169968] 
\draw    (150,75) -- (112.5,140) ;
\draw    (75,95)  node (5){$p$} ;
\end{tikzpicture}
\caption{The \emph{windmill graph}  $H$ together with its center $p$.}\label{F:Wheel}
 \end{figure}

 Let $\mathcal{K}(H)$ be the collection of all finite graphs which omit the graph $H$. That is, all graphs $A$ for which there is no injective homomorphism $f\colon H\to A$.
  In \cite{Ko} it is proved that $\mathcal{C}(H):=\lim (\mathcal{K}(H))$ does not admit a universal element.

\begin{theorem}
The class $\mathcal{C}(H)$ admits a generic element. 
\end{theorem}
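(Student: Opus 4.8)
The plan is to invoke the characterization recorded above: since $\mathcal{K}(H)=\mathrm{Age}(\mathcal{C}(H))$ is a countable class with HP and JEP, by \cite[Theorem 2.5]{KT} (or \cite[Theorem 4.2.2]{K}) it suffices to prove that $\mathcal{K}(H)$ has the weak amalgamation property. I will not aim for the full amalgamation property; indeed, since $\mathcal{C}(H)$ has no universal element by \cite{Ko}, $\mathcal{K}(H)$ cannot have AP (an amalgamation class would have a universal \Fraisse{} limit), and the freedom in WAP to let the two copies of $\widehat{A}$ disagree away from $A$ will turn out to be essential. The starting point is the \emph{local} reformulation of the forbidden configuration: a graph $G$ is $H$-free precisely when, for every vertex $v$, the graph induced on the neighborhood $N(v)$ contains no three independent edges, i.e. has matching number at most $2$ (the three ``petals'' of a copy of $H$ are exactly three vertex-disjoint triangles through its central cut-vertex). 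A second, equally important observation is that in a free amalgam $B\cup_{\widehat A}C$ any copy of $H$ lying in neither side must be centred at a vertex of the common part $\widehat A$, with its petals distributed among $B$ and $C$: each petal-triangle is connected and uses no cross-edge, hence lies wholly within $B$ or within $C$, while the shared centre then lies in $B\cap C=\widehat A$.

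Next I would construct the witness $\widehat A$ for a given $A\in\mathcal{K}(H)$ by \emph{saturating neighborhoods}: at every vertex $v$ of $A$ I adjoin fresh private triangles through $v$ until the matching number of $N(v)$ reaches the maximal value $2$, choosing the new petal-vertices adjacent only to $v$ and to their own partner, so that no other neighborhood is affected and $\widehat A$ remains in $\mathcal{K}(H)$. The purpose of this saturation is the following rigidity: if $B\supseteq\widehat A$ is $H$-free, then the two internal petals installed at $v$ already form a maximum matching of $N_B(v)$, so every triangle through $v$ in $B$ must meet the fixed four-element set $Q_v\subseteq\widehat A\setminus A$ spanned by those petals. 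In particular no extension of $\widehat A$ can anchor a triangle at $v$ disjoint from $Q_v$. This is exactly what fails for $A=\widehat A=K_{1,3}$, where two extensions can anchor disjoint petals at distinct leaves and provably do not amalgamate, and it is what saturation repairs.

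Finally, given two extensions $B,C\supseteq\widehat A$, I would form the amalgam $D$ by gluing $B$ and $C$ along $A$, keeping the genuinely new vertices of the two sides disjoint (identifying them would violate non-edges), and \emph{identifying the two copies of each witness set $Q_v$ through a carefully chosen bijection}. Since the internal petals were built symmetrically, this bijection may swap the two endpoints of a petal; the idea is to align the extra triangles contributed by $B$ and by $C$ so that, petal by petal, they attach at the \emph{same} endpoint, collapsing what would otherwise be independent petals into overlapping ones and keeping every neighborhood matching number at $2$. The main obstacle is precisely this alignment: it is a self-contained combinatorial problem about amalgamating two graphs of matching number $\le 2$ over a common maximum matching while preserving matching number $\le 2$, and it is here that the argument genuinely needs the structure theory of graphs with no three independent edges. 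One must, for instance, rule out or reroute ``spread-out'' neighborhoods such as interlocking five-cycles, and verify global consistency of the chosen bijections across all vertices simultaneously (this is the step that ties the construction to the combinatorics of bowtie-free graphs). Once the alignment is shown to exist, one checks that $D\in\mathcal{K}(H)$ and that both embeddings agree on $A$, which establishes WAP and hence the existence of a generic element of $\mathcal{C}(H)$.
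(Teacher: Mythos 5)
Your opening moves match the paper's: the reduction to WAP via \cite[Theorem 2.5]{KT}, the reformulation of $H$-freeness as ``every vertex neighborhood has matching number at most $2$'', and the first saturation step (installing two private petal-triangles at every vertex of $A$) are exactly the paper's setup and its Claim 1, which builds an extension $E$ in which every $a\in A$ is the center of a bowtie with petal set $\{v_1[a],v_2[a],v_3[a],v_4[a]\}$. The divergence is in the amalgamation step, and there your proposal has a genuine gap --- in fact the strategy as stated cannot be completed.

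You propose to glue $B$ and $C$ along $A$ \emph{and} along each witness set $Q_v$ via a carefully chosen bijection. The fatal problem is that your saturation protects only the neighborhoods of vertices of $A$, not of the petal vertices $q\in Q_v$ themselves: in $\widehat A$ such a $q$ has neighborhood $\{v,q'\}$ of matching number $1$, so an $H$-free extension $B$ of $\widehat A$ may attach a private triangle (two fresh vertices) at $q_1$ \emph{and} another at $q_2$, and $C$ may do the same. Whichever bijection you use to identify the copy of $\{q_1,q_2\}$ in $B$ with the copy in $C$, each identified vertex inherits one private triangle from $B$, one from $C$, and the edge $\{v,\text{its partner}\}$ --- three independent edges in its neighborhood in $D$, i.e.\ a copy of $H$ centered at an identified petal vertex, lying in neither $B$ nor $C$. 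So the ``alignment'' you defer to does not exist; and saturating the petal vertices as well only pushes the problem to the next layer of new vertices. (Even apart from this, you explicitly leave the alignment step as an open combinatorial problem, so the crux of the proof is missing.) The paper's proof avoids both issues by \emph{never identifying new vertices}: it takes the free amalgam over $A$ alone, so the only vertices whose neighborhoods grow --- hence the only possible centers of a new copy of $H$ --- lie in $A$; and it adds a second saturation round (its Claim 2): whenever some extension of $\widehat A$ could contain a vertex adjacent to both $a$ and a petal vertex $v_k[a]$ (and outside the petal set), a witness $\hat w$ with those adjacencies is already placed inside $\widehat A$. A purported copy of $H$ in the amalgam, centered at $a\in A$ with one wing in $B$ and two in $C$, is then \emph{rerouted}: the $B$-wing through $v_k[a]$ is replaced by the triangle $(v_k[a],\hat w,a)$, which lies in $\widehat A$ and hence in $C$, and a case analysis shows the rerouted homomorphism is injective, yielding a copy of $H$ inside $C$ --- a contradiction. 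These two ingredients, no gluing of new points and the second saturation that makes rerouting possible, are precisely what your proposal lacks.
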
 
\begin{proof}
 By Fact \ref{F:1} it suffices to show that  $\mathcal{K}(H)$ has JEP and WAP. The fact that   $\mathcal{K}(H)$ has JEP is clear since for any two graphs $A,B\in \mathcal{K}(H)$, the disjoint union $A\sqcup B$ of $A$ and $B$ is also in $\mathcal{K}(H)$. So we are left to show that   $\mathcal{K}(H)$ has  WAP.
 
Let $A\in \mathcal{K}(H)$. We will define an extension $\widehat{A}\in\mathcal{K}(H)$ of $A$ so that any two further extensions $B,C\in\mathcal{K}(H)$ of $\widehat{A}$  amalgamate over $A$ to an element of $\mathcal{K}(H)$. %The graph $\widehat{A}$ will be defined into two steps. 
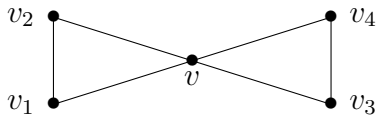
\begin{figure}[ht!]
\begin{tikzpicture}[x=0.35pt,y=0.22pt,yscale=-1,xscale=1]
%uncomment if require: \path (0,152); %set diagram left start at 0, and has height of 152
%Shape: Triangle [id:dp2245577682893357] 
\draw   (150,75) node (1){$\bullet$} -- (0,0) node (2){$\bullet$} -- (0,150) node (3){$\bullet$} -- cycle;
%Shape: Triangle [id:dp1970227035634612] 
\draw   (150,75) --  (300,150) node (4){$\bullet$} -- (300,0) node (5){$\bullet$}-- cycle ;
\draw   (150,105) node (1'){$v$};
\draw   (-35,0) node (12){$v_2$};
\draw   (-35,150) node (12){$v_1$};
\draw   (335,0) node (12){$v_4$};
\draw   (335,150) node (12){$v_3$};
\end{tikzpicture}
\caption{ The bowtie graph $\bowtie$ together with a fixed labeling.}\label{F:bowtie}
 \end{figure}
 
Let $\bowtie$ be the bowtie graph, endowed with the labeling  from Figure \ref{F:bowtie}.

\begin{claim}\label{Claim1}
 There is graph $A^{\bowtie}\in \mathcal{K}(H)$ with $A\leq A^{\bowtie}$, so that for every vertex $a$ of  $A^{\bowtie}$ there is an injective homomorphism $j\colon\bowtie{}\to A^{\bowtie}$, with $j(v)=a$.
\end{claim}
\begin{proof}[Proof of Claim]
Let $X$ be the collection of all those vertices $a$ of $A$ for which there is no  injective homomorphism $j\colon\bowtie{}\to A$, with $j(v)=a$. In other words, $X$ is the collection of all $a\in\mathrm{dom}(A)$ for which there do not exist two  $3$-cycles $\{b_1,b_2,a\}$ and $\{c_1,c_2,a\}$  in $A$,   containing $a$, so that $\{b_1,b_2\}\cap \{c_1,c_2\}=\emptyset$. We partition $X=X_0\sqcup X_1$ into two pieces: the set $X_1$ which consists of those vertices $a$ of $A$ for which there is a  $3$-cycle $\{a,b_1,b_2\}$ in 
 $A$ containing $a$; the set $X_0$ which consists of those vertices $a$ of $A$ for which there is no $3$-cycle in $A$ containing $a$.
 
 One could try to extend $A$ by adding: for every $a\in X_1$ a  new triangle $\{p,q,r\}$ under the single identification $a=p$; and for every $a\in X_0$ two new triangles $\{p_1,q_1,r_1\}$ and $\{p_2,q_2,r_2\}$ under the single identification $a=p_1=p_2$. The resulting graph, call it $A'$, would clearly be an element of $\mathcal{K}(H)$ and for every $a\in  \mathrm{dom}(A)$ we would have some injective homomorphism $j\colon\bowtie{}\to A'$, with $j(v)=a$. However, the newly introduced vertices of the form $q,r$, or $q_1,r_1$, $q_2,r_2$, would not be the centers of a bowtie in $A'$. Hence $A'$ would fail to satisfy the property of the claim. 
 
 To remedy this we consider the graph $P$ from Figure \ref{F:P,p} together with its specified vertex $p$. It is clear that $P\in\mathcal{K}(H)$ since the degree of each vertex of $P$ is strictly less than $6$.  Notice moreover that for every vertex $q$ in $P$, with  $q\neq p$, there is an injective homomorphism $j\colon\bowtie{}\to P$, with $j(v)=q$. 
\begin{figure}[ht!]
\tikzset{every picture/.style={line width=0.75pt}} %set default line width to 0.75pt        

\begin{tikzpicture}[x=0.75pt,y=0.4pt,yscale=-1,xscale=1]
\draw   (125,260) node (1'){$p$};
\draw    (180,160) node (2){$\bullet$} --  (140,260) node (1){$\bullet$}; 
%Straight Lines [id:da3634674963883002] 
\draw    (180,360) node (3){$\bullet$} -- (140,260) ;
%Straight Lines [id:da5188496085421039] 
\draw    (180,160) -- (180,360) ;
%Straight Lines [id:da708149526098624] 
\draw    (180,160) -- (300,100) node (4){$\bullet$} ;
%Straight Lines [id:da4203461039920606] 
\draw    (180,360) -- (300,420) node (5){$\bullet$} ;
%Straight Lines [id:da09447549888953777] 
\draw    (300,100) -- (280,180) node (6){$\bullet$} ;
%Straight Lines [id:da8204256190742489] 
\draw    (280,340)node (7){$\bullet$} -- (300,420) ;
%Straight Lines [id:da49599749123525716] 
\draw    (180,160) -- (280,180) ;
%Straight Lines [id:da08202082497789687] 
\draw    (280,340) -- (180,360)  ;
%Straight Lines [id:da04029678199478226] 
\draw    (320,240) node (8){$\bullet$} -- (320,280) node (9){$\bullet$} ;
%Straight Lines [id:da3194288704813164] 
\draw    (320,240) -- (260,260) node (10){$\bullet$};
%Straight Lines [id:da2352124264686858] 
\draw    (260,260) -- (320,280)   ;
%Straight Lines [id:da32833509484224543] 
\draw    (460,240) node (11){$\bullet$} -- (460,280) node (12){$\bullet$} ;
%Straight Lines [id:da2670751785792701] 
\draw    (460,240) -- (400,260) node (13){$\bullet$} ;
%Straight Lines [id:da0911683061086509] 
\draw    (400,260) -- (460,280) ;
%Straight Lines [id:da7842868446177298] 
\draw    (280,180) -- (320,240) ;
%Straight Lines [id:da2879694902009826] 
\draw    (280,180) -- (320,280) ;
%Straight Lines [id:da724010538021047] 
\draw    (280,180) -- (260,260) ;
%Straight Lines [id:da8638788186388471] 
\draw    (320,240) -- (280,340) ;
%Straight Lines [id:da6504323522084103] 
\draw    (320,280) -- (280,340) ;
%Straight Lines [id:da7726458401033689] 
\draw    (260,260) -- (280,340) ;
%Straight Lines [id:da7098576413397869] 
\draw    (300,420) -- (460,280) ;
%Straight Lines [id:da06431618075399181] 
\draw    (400,260) -- (300,420) ;
%Straight Lines [id:da903759581785108] 
\draw    (460,240) -- (300,100) ;
%Straight Lines [id:da8007923180705709] 
\draw    (300,100) -- (460,280) ;
%Straight Lines [id:da9969769445191546] 
\draw    (300,100) -- (400,260) ;
\draw  (300,420) --  (460,240);
\end{tikzpicture} 
 \caption{The graph $P$ together with its specified vertex $p$.}\label{F:P,p}
 \end{figure}
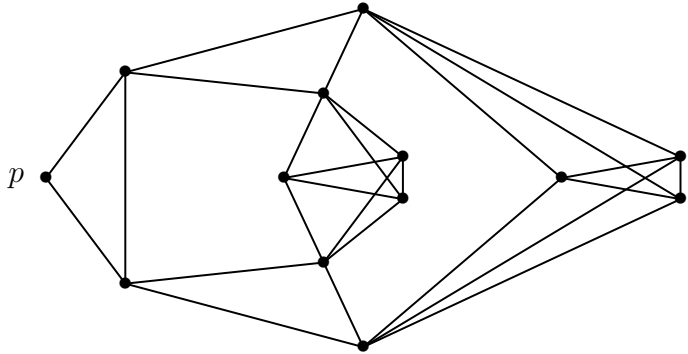

The graph $A^{\bowtie}$ is  attained by gluing several copies of $P$ to $A$ by identifying  the vertex corresponding to $p$ of each piece to some vertex $a\in X$. More precisely, for each $a\in X_1$ we introduce one new  copy of $P$ and we glue it to $A$ under the single identification $a=p$. For every $a\in X_0$ we introduce two new disjoint copies $P_1$ and $P_2$ of $P$ and we let $p_1\in\mathrm{dom}(P_1)$ and $p_2\in\mathrm{dom}(P_2)$ be the vertices of $P_1$ and $P_2$ corresponding to the vertex $p$ of $P$ from Figure \ref{F:P,p}. We then glue $P_1$ and $P_2$ on $A$ under the single  identification $p_1=p_2=a$.
It is easy to check that the resulting  extension $A^{\bowtie}$ of $A$ is in $\mathcal{K}(H)$ and that for every  vertex $a$ of $A^{\bowtie}$  there is an  injective homomorphism $j\colon\bowtie{}\to A^{\bowtie}$, with $j(v)=a$ 
 \end{proof}

\begin{claim}\label{Claim2}
There is some $\widehat{A}\in\mathcal{K}(H)$ with $A^{\bowtie}\leq \widehat{A}$ so that for every edge $\{a_1,a_2\}$ in $A^{\bowtie}$ the following holds: if there is a further extension $E\geq \widehat{A}$ of $\widehat{A}$ with $E\in\mathcal{K}(H)$ and a vertex $w\in\mathrm{dom} (E)\setminus \mathrm{dom}(A^{\bowtie})$ so that $\{w,a_1,a_2\}$ forms a $3$-cycle in $E$, then there is some $\hat{w}\in \mathrm{dom}(\widehat{A}) \setminus \mathrm{dom}(A^{\bowtie}) $ whose set of neighbors in $\widehat{A}$ is precisely $\{a_1,a_2\}$.
\end{claim}
 \begin{proof}[Proof of Claim]
We build $\widehat{A}$ by a simple induction. First let $\{\{a^k_1,a^k_2\}\colon  k\in \{1,\ldots,n\} \}$ be an enumeration of all edges of $A^{\bowtie}$ and set $A_0:= A^{\bowtie}$. Assume that $A_{k-1}$ has been defined for some $k\in \{1,\ldots,n-1\}$. We define $A_k$ as follows.  If there is no  extension $E\geq A_{k-1}$ of $A_{k-1}$ with $E\in\mathcal{K}(H)$ having a vertex $w\in\mathrm{dom} (E)\setminus \mathrm{dom}(A^{\bowtie})$ so that $\{w,a_1,a_2\}$ forms a $3$-cycle in $E$, then let $A_k:=A_{k-1}$. Otherwise, fix such $E$ and $w$, and  let $A_k$  be the graph attained by adding a new vertex $\widehat{w}^k$ to $A_{k-1}$ and connecting it only to $a^k_1$ and $a^k_2$. Notice that $A_{k}\in \mathcal{K}(H)$. Indeed, assume towards contradiction that there was some injective homomorphism  $j\colon H \to A_k$. Since $A_{k-1}\in\mathcal{K}(H)$ we have that $\widehat{w}^k\in\mathrm{range}(j)$. Let $j'\colon H \to E$  be defined by setting $j'(j^{-1}(\widehat{w}^k))=w$ and letting $j'=j$ on $\mathrm{dom}(H)\setminus \{j^{-1}(\widehat{w}^k)\}$. Notice that $j'$ is  an injective homomorhism, contradicting that  $E\in\mathcal{K}(H)$. To conclude the proof we set $\widehat{A}:=A_n$

 \end{proof}

%\begin{claim}\label{Claim2}
 %There is an extension $ \widehat{A}\geq E$ with $ \widehat{A} \in \mathcal{K}(H)$, so that for every vertex $a$ of  $A$, and every $k\in \{1,2,3,4\}$, if there is a further extension $ F\geq \widehat{A}$ with $F\in \mathcal{K}(H)$ containing a vertex $w\not\in \{v_1[a],v_2[a], v_3[a],v_4[a]\}$ with $w R v_k[a]$ and $w R a$,   then there is a vertex  $\hat{w}\not\in \{v_1[a],v_2[a],v_3[a],v_4[a]\}$ already in  $\widehat{A}$ so that  $\hat{w} R v_k[a]$ and  $\hat{w}  R a$.
%\end{claim}
 %\begin{proof}[Proof of Claim]
 %The proof is by a simple induction, similar to  the proof of Claim \ref{Claim1}, and it is left to the reader.
 %\end{proof}

 Let now $B,C\in \mathcal{K}(H)$ be any two extensions $\widehat{A}\leq B,C$  of $\widehat{A}$. We will show that $B,C$ amalgamate over $A$ to some $D\in\mathcal{K}(H)$.  Indeed, let $D$ be the  {\bf free amalgam} $D:=B\sqcup_{A^{\bowtie}} C$ of $B$ and $C$ over $A^{\bowtie}$. That is, $D$ is the graph attained by taking the  disjoint union  of the graphs $B$ and $C$, and identifying the copy of each vertex $a$ of $A^{\bowtie}$ which lies in $B$, with the associated copy of $a$ which lies in $C$. %Let $i_B\colon B\to D$ and  $i_C\colon C\to D$ be the canonical inclusions.
  We  view $B$ and $C$ as subgraphs of $D$ under the obvious identifications and  use the notation $\widehat{A}^B$ and $\widehat{A}^C$  for the copies of $\widehat{A}$ in $B\leq D$ and $C\leq D$ respectively. These graphs share $A^{\bowtie}$ as a common subgraph, but $\big(\mathrm{dom}(\widehat{A}^B)\setminus\mathrm{dom}(A^{\bowtie})\big)\cap \big(\mathrm{dom}(\widehat{A}^C)\setminus\mathrm{dom}(A^{\bowtie})\big)=\emptyset$. 
  
  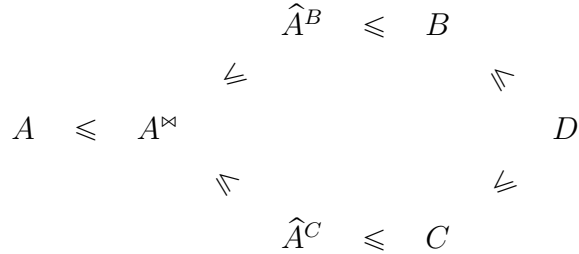
\begin{figure}[ht!]
 \begin{tikzcd}
\; & \; & \widehat{A}^B \arrow[r,symbol=\leq]  &  B \arrow[dr, symbol=\leq] & \\
 A \arrow[r,symbol=\leq] & A^{\bowtie}   \arrow[ur, symbol=\leq]   \arrow[dr, symbol=\leq]  &   &   & D\\ 
\; & \; & \widehat{A}^C \arrow[r,symbol=\leq]  &  C \arrow[ur, symbol=\leq]  & \\
\end{tikzcd}
\caption{The stratification of the graph $D$ into the pertinent subgraphs.}
\label{F:amalgam}
 \end{figure}

%Let $A^{\bowtie}$ be the graph from   Claim \ref{Claim1} and for every vertex $a$ of $A^{\bowtie}$  let $v_1[a]$, $v_2[a]$, $v[a]$, $v_3[a]$, $v_4[a]$ be the vertices of $A^{\bowtie}$  which are the images of the associated vertices of $\bowtie$ from Figure \ref{F:bowtie}, under some fixed injective homomorphism $\bowtie{} \to A^{\bowtie}$ , with   $v[a]=a$.

We are left to show that $D\in \mathcal{K}(H)$. Assume towards a contradiction that there is some injective homomorphism $j\colon H\to D$. Let $a:=j(p)$ be the image of the center $p$ of $H$ under $j$. By the structure of  the free amalgam $D:=B\sqcup_{A^{\bowtie}} C$,  it follows that $a$ lies in $A^{\bowtie}$: otherwise the copy of $H$ would already have to be entirely included  either in  $B$ or in $C$, contradicting that $B\in\mathcal{K}(H)$ or  $C\in\mathcal{K}(H)$, respectively. Since $a$ lies in $A^{\bowtie}$, by Claim \ref{Claim1} there is an injective homomorphism $i\colon \bowtie \to A^{\bowtie}$ with $i(v)=a$. Let   $a_1,a_2,a,a_3,a_4\in\mathrm{dom}(A^{\bowtie})$ be the images of the vertices $v_1,v_2,v,v_3,v_4$ under $i$.
 
\begin{claim}\label{Claim3}
If $\{a,d_1,d_2\}$ is a $3$-cycle in $D$, then  $\{a,d_1,d_2\}$ is entirely included either in $B$ or in $C$. Moreover, $\{d_1,d_2\}\cap  \{ a_1,a_2,a_3,a_4\}\neq \emptyset$.
\end{claim}
\begin{proof}[Proof of Claim]
The first statement follows  from the structure of the free amalgam $D:=B\sqcup_{A^{\bowtie}} C$. But then, since $A^{\bowtie}\leq B,C$,  if  $\{d_1,d_2\}\cap  \{ a_1,a_2,a_3,a_4\}= \emptyset$, the  $3$-cycles $\{a,d_1,d_2\}$, $\{a,a_1,a_2\}$, $\{a,a_3,a_4\}$ would form the three wings of a copy of $H$ in either $B$ or $C$. This would contradict either $B\in\mathcal{K}(H)$ or   $C\in\mathcal{K}(H)$.    
\end{proof}

 Let $\{b_1,b_2,a\}, \{c_1,c_2,a\}, \{c'_1,c'_2,a\}$ be the three cycles of length $3$ in $D$, which correspond to the three wings  of $j(H)$.  %Again, by the structure of the free amalgam each such cycle is entirely included in either $B$ or $C$. 
By the first statement of Claim \ref{Claim3}  we may assume without loss of generality that $b_1,b_2\in\mathrm{dom}(B)$ and
 $c_1,c_2,c'_1,c'_2\in \mathrm{dom}(C)$.  By the second statement of Claim \ref{Claim3}  we may also assume that  $b_1=a_1$. 
 It follows that $b_2\in \mathrm{dom}(B)\setminus \mathrm{dom}(A^{\bowtie})$, since otherwise, the wing $\{b_1,b_2,a\}$ would  entirely lie in $ A^{\bowtie}\leq  C$,  contradicting that $C\in \mathcal{K}(H)$. 
  By Claim \ref{Claim2} we can find some  
vertex   $\hat{w}$ in  $\widehat{A}$ whose set of neighbors within $\widehat{A}$ are precisely $\{a_1,a\}$. Let $\hat{w}^C$ be the associated vertex of $C$ corresponding to $\hat{w}$ in the copy $\widehat{A}^C\leq C$ of  $\widehat{A}$ within $C$. Notice that since $\widehat{A}^C$ is an embedded copy of $\widehat{A}$ in $C$, the intersection of the set of neighbors of $\hat{w}^C$ in $C$ with the set $\{a,a_1,a_2,a_3,a_4\}\subseteq \mathrm{dom}(A^{\bowtie})$ is $\{a_1,a\}$.

Consider the map $j'\colon H\to C$ which is defined as $j$, with the only exception that $j'(j^{-1}(b_2))=\hat{w}^C$. Clearly $j'$ is a homomorphism mapping the three wings of $H$ to the cycles  $\{ a_1,\hat{w}^C,a\}, \{c_1,c_2,a\}, \{c'_1,c'_2,a\}$.  We will show that $j'$ is also injective, which will conclude the proof by contradicting that $C\in \mathcal{K}(H)$. 

To show that $j'$ is injective it suffices to show that  $\hat{w}^C\not\in \{c_1,c_2,c'_1,c'_2\}$.  By Claim \ref{Claim3} we have that 
$\{c_1,c_2\}\cap  \{ a_1,a_2,a_3,a_4\}\neq \emptyset$ and $\{c'_1,c'_2\}\cap  \{ a_1,a_2,a_3,a_4\}\neq \emptyset$. Moreover, since by injectivity of $j$ the six vertices $a_1=b_1,b_2,c_1,c_2,c'_1,c'_2$ are pairwise distinct, we  in fact  have  that  $\{c_1,c_2\}\cap  \{ a_2,a_3,a_4\}\neq \emptyset$ and $\{c'_1,c'_2\}\cap  \{a_2,a_3,a_4\}\neq \emptyset$.
But then  it follows that $\hat{w}^C\not\in \{c_1,c_2,c'_1,c'_2\}$. Indeed, if say $\hat{w}^C=c_1$, then since $\hat{w}^C\not \in \mathrm{dom}(A^{\bowtie})$ we would have $c_2\in \{ a_2,a_3,a_4\}$. But since $\hat{w}^C=c_1$ and $c_2$ are neighbors, this would contradict the aforementioned observation that, in $C$, the only neighbors  of $\hat{w}^C$ from the set $\{a,a_1,a_2,a_3,a_4\}$ are $a_1$ and $a$.

This concludes the proof of   $\hat{w}^C\not\in \{c_1,c_2,c'_1,c'_2\}$, yielding an injective homomorphism $j'\colon H\to C$, which contradicts that $C\in \mathcal{K}(H)$. Hence $D\in\mathcal{K}(H)$.

\end{proof}

We have established that if $\mathcal{C}=\mathrm{lim}(\mathcal{K})$  has a universal element, then  $\mathcal{K}$ satisfies WAP (and JEP) and that the converse is not true.  This concludes the proof of Theorem \ref{T:1}.
In the context of
 Theorem  \ref{T:1} and Fact \ref{F:1} it is very natural to ask:

\begin{question}
Is there an amalgamation property for $\mathcal{K}$, strictly lying between AP and WAP, which characterizes  when $\mathcal{C}=\mathrm{lim}(\mathcal{K})$  admits a universal element?\end{question}

\section{$C_4$-free graphs have no generic element}

In this section we prove Theorem \ref{T:2}. Let $\mathcal{K}(C_4)$ be the collection of all finite graphs which omit the graph $C_4$. That is, all graphs $A$ for which there is no injective homomorphism $f\colon C_4\to A$ from the $4$-cycle graph $C_4$ into $A$.
 By Fact \ref{F:1} it suffices to show that  the class $\mathcal{K}(C_4)$ does not have WAP.

We will need the following proposition. Let $v$ be a vertex of a graph $A$. We say that $v$ is {\bf dominating} if there is an edge between $v$ and any other vertex  of $A$. % Let  $k\in\mathbb{N}$. We say that $A$ is {\bf $(k,k-1)$-regular} if every vertex of $A$ has either $k$-many or $(k-1)$-many neighbors.

\begin{proposition}\label{prop:Blokhuis}
If $A\in \mathcal{K}(C_4)$ is of diameter $2$ without dominating vertex, then there is an edge in $A$ not contained in any $3$-cycle.

\end{proposition}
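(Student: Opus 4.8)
The plan is to prove the contrapositive: I would assume that \emph{every} edge of $A$ lies in some $3$-cycle and show that this forces $A$ to have a dominating vertex, contradicting the hypothesis. The crux is to recognize that, under this assumption, the two structural hypotheses combine to say that every pair of distinct vertices of $A$ has \emph{exactly one} common neighbor --- precisely the configuration governed by the Friendship Theorem of Erd\H{o}s, R\'enyi and S\'os.

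To establish that any two distinct vertices $x \neq y$ have exactly one common neighbor, I would argue separately for uniqueness and existence. Uniqueness is immediate from $C_4$-freeness: were $x,y$ to have two distinct common neighbors $w_1,w_2$, then $x,w_1,y,w_2$ would trace a simple $4$-cycle in $A$, contradicting $A\in\mathcal{K}(C_4)$. For existence I would split into two cases according to whether $x$ and $y$ are adjacent. If they are adjacent, then by assumption the edge $\{x,y\}$ lies in a $3$-cycle, and the third vertex of that triangle is a common neighbor. If they are non-adjacent, then since $A$ has diameter $2$ they are joined by a path of length $2$, whose midpoint is a common neighbor. In both cases $x$ and $y$ have at least one, and hence exactly one, common neighbor.

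With this in hand I would invoke the Friendship Theorem: a finite graph in which every two distinct vertices have exactly one common neighbor must be a windmill graph, and so must possess a vertex adjacent to all others. That vertex is dominating, contradicting the standing hypothesis. Hence the assumption that every edge lies in a triangle cannot hold, and $A$ contains an edge lying in no $3$-cycle, as required.

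The entire weight of the argument is carried by the reduction to the Friendship Theorem; once every pair of vertices is shown to have a unique common neighbor, the conclusion is immediate. The step I expect to require the most care is verifying that existence of a common neighbor holds \emph{uniformly} across adjacent and non-adjacent pairs --- it is exactly the triangle hypothesis that supplies the adjacent case, and this is what makes the Friendship Theorem applicable. A fully self-contained proof would instead have to reprove the Friendship Theorem (typically through a spectral argument on the adjacency matrix), which I would avoid by citing it directly.
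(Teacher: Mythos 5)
Your proof is correct, but it takes a genuinely different route from the paper's. The paper invokes the Blokhuis--Brouwer structure theorem for $C_4$-free (equivalently, geodetic) graphs of diameter $2$ without dominating vertex: such a graph is either strongly regular with $\mu=1$, in which case the Deutsch--Fisher result rules out $\lambda=1$ and hence the graph has no $3$-cycles at all, or it has two valencies, in which case it already contains maximal cliques of size $2$, i.e.\ edges in no triangle. You instead pass to the contrapositive and observe that the hypothesis that every edge lies in a triangle upgrades the geodetic condition (non-adjacent vertices have exactly one common neighbour, by $C_4$-freeness plus diameter $2$) to the friendship condition (\emph{all} pairs of distinct vertices have exactly one common neighbour), at which point the Erd\H{o}s--R\'enyi--S\'os Friendship Theorem produces a dominating vertex (the windmill centre), contradicting the hypothesis. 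The supporting checks in your argument go through: two common neighbours of distinct vertices do give an injective homomorphism of $C_4$ (chords are irrelevant for this notion of $C_4$-freeness), the diameter-$2$ hypothesis covers the non-adjacent case, and $A$ is finite, so the Friendship Theorem applies. What each approach buys: yours is leaner, resting on a single classical and widely known theorem instead of two specialized structure theorems, and it makes transparent exactly where the triangle hypothesis enters; the paper's argument yields slightly more refined information (in the strongly regular case the graph is shown to be triangle-free, not merely to contain one triangle-free edge), which is in the spirit of the quantitative structure theory it cites, but is not needed for the application.
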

\begin{proof}

Let $A$ be a $C_4$-free graph of diameter $2$ without dominating vertex. Then
by Theorem~1 of~\cite{BB} we are in one of two cases:\begin{enumerate}
\item  the graph $A$ is strongly regular, i.e., there are integers $k, \lambda, \mu$ such that every vertex has degree $k$, every two adjacent vertices have $\lambda$ common neighbours and every two non-adjacent vertices have $\mu$ common neighbours.  Note that we have $\mu=1$.
\item the graph $A$ has two degrees, and in this case there are edges not
contained in a triangle (expressed in \cite{BB} as saying there are maximal
cliques having $2$ elements).
\end{enumerate}

In case (1) above,  we have $\lambda\neq 1$ (see e.g. \cite{DF} or  \cite{Ca}, Theorem 2.4), and so in
this case $A$ contains no 3-cycles.

Thus, any $C_4$-free graph of diameter 2 without dominating vertex has an
edge not contained in a triangle.
\end{proof}

Let $E$ be a finite graph and let $X\subseteq \mathrm{dom}(E)$. A vertex $v$ of $E$ is {\bf determined over $X$} if it belongs to the smallest set of vertices  $Z\subseteq\mathrm{dom}(E)$ so that $X\subseteq Z$, satisfying:  if $v\in \mathrm{dom}(E)$ and there are $x,y\in Z$ with $x\neq y$, $E\models (v R x)$,
 and $E\models (v R y)$, then $v\in Z$. We write $\mathcal{D}(X,E)$ for the collection of all vertices of $E$ which are determined over $X$. The set $\mathcal{D}(X,E)$ can be constructed inductively by setting $\mathcal{D}(X,E)=X_n$, where: $X_0:=X$; $X_{k+1}$ is the union of $X_k$ together with those vertices $v$ of $E$ for which there are distinct $x,y\in X_k$ with  $E\models (v R x)$ and $E\models (v R y)$; and $n$ is the least natural number with $X_{n+1}=X_n$. 

Let  $A,E,B,C$ be finite graphs with $A\leq E$ and $E\leq B,C$. By definition, in any graph $D$ which is an amalgam 
of $B$ and $C$ over $A$, the vertices of the copy of $A$ in $B$ have to get identified to the associated vertices of the copy of $A$ in $C$.  If
 $D\in \mathcal{K}(C_4)$, then these identifications extend to vertices of $\mathcal{D}( \mathrm{dom}(A),E)$ as well:

\begin{lemma}\label{C:3}
Let  $f,g\colon E\to D$ be two graph embeddings with $D\in \mathcal{K}(C_4)$. Assume that  $f(v)=g(v)$ for all $v\in X\subseteq \mathrm{dom}(E)$. Then,  $f(v)=g(v)$ for every  $v\in \mathcal{D}( X,E)$.
\end{lemma}
\begin{proof}
Assume that $f(x)=g(x)$ and  $f(y)=g(y)$ for some $x,y\in\mathrm{dom}(E)$. If $E\models (v R x)$ and $E\models (v R y)$, then $f(v)=g(v)$, since otherwise we would have the $4$-cycle $f(x),f(v),f(y),g(v)$ in $E$.
\end{proof}

We can now turn to the proof of Theorem \ref{T:2}.

\begin{proof}[Proof of Theorem \ref{T:2}]
Let $A$ be any graph in $\mathcal{K}(C_4)$ with the property that
$A$ does not embed to any $E\in \mathcal{K}(C_4)$ so that $E$ has a dominating vertex. For example, we can take $A:=C_5\in \mathcal{K}(C_4)$ to be the pentagon. We will show that for every 
embedding $i\colon A\to \widehat{A}$,  with $\widehat{A}\in \mathcal{K}(C_4)$, there are embeddings $f\colon \widehat{A}\to B$ and  $g\colon \widehat{A}\to C$, with $B,C\in\mathcal{K}(C_4)$, which do not amalgamate over $A$. This will show  that $\mathcal{K}(C_4)$ fails WAP and hence $\mathcal{C}(C_4)$ does not admit a generic element; see  Fact \ref{F:1}.

Fix any embedding $i\colon A\to \widehat{A}$  with $\widehat{A}\in \mathcal{K}(C_4)$. We view $A$ as a  subgraph of $\widehat{A}$ after renaming some of the vertices, if necessary.

\begin{claim}\label{C:4}
There is   $E\in \mathcal{K}(C_4)$ with $\widehat{A}\leq E$, and  vertices $x,x',y',y\in  \mathcal{D}(\mathrm{dom}(A),E)$ which 
form a geodesic path  of length $3$ in $E$.
\end{claim}
\begin{proof}[Proof of Claim \ref{C:4}]
Let $D$ be the subgraph of $\widehat{A}$ induced on the set
$ \mathcal{D}(\mathrm{dom}(A),\widehat{A})$ and
%Notice that any path of length $\leq 3$ in $D$ is geodesic in $D$ if and only if it is geodesic in $\widehat{A}$. 
%Hence, if the diameter of $D$ is larger than $2$ then we just take $E=\widehat{A}$. 
%Otherwise, 
let  $\{\{x_i,y_i\}\colon i\in I\}$ be the, potentially empty, list of all edges $\{x,y\}$  of $D$ for which there is no $v\in \mathrm{dom}(D)$ so that $x,y,v$ is a $3$-cycle in $D$. Notice that if such  vertex $v$  existed in $\widehat{A}$ then $v$ would have been determined over $A$ and hence $v$ would already be in $D$. If follows that  for all $i\in I$ the edge $\{x_i,y_i\}$ is not contained in a $3$-cycle even in the ambient graph $\widehat{A}$.

We now extend $\widehat{A}$ by adding for every $i\in I$ a new vertex $v_i$ which we connect by an edge only with $x_i$ and with $y_i$. The resulting graph $E$ is clearly in $ \mathcal{K}(C_4)$ and 
$\mathcal{D}(\mathrm{dom}(A),E)=\mathrm{dom}(D)\cup\{v_i \colon i\in I\}$.  Let $D'$ be the subgraph of $E$ induced on the latter set. All edges of $D'$ are contained in some $3$-cycle of $D'$. Also, $D'$ has no dominating vertex since it contains $A$. By Proposition \ref{prop:Blokhuis} there is a geodesic path $x,x',y',y$ of length $3$ in $D'$. This path remains geodesic in $E$ since
any vertex $v$ in $E$ which would witness a potential  ``shortcut"---that is, any $v$ with  $E\models vR x$ and $E\models vR y$---would  have to be determined over $\{x,y\}\subseteq \mathrm{dom}(D')$ in $E$ and hence it would have to already lie in $D'$. 
\end{proof}

Let now $E\supseteq \widehat{A}$ and let $x,x',y',y\in \mathrm{dom}(E)$  as in Claim \ref{C:4}. In particular, we have that
 $x,x',y',y\in\mathcal{D}(\mathrm{dom}(A),E)$.  We extend $E$ to $E'$ by first adding a new vertex $z$ which is connected only to $x$ and $y$. We then add a vertex $w_y$ which is connected only to $x$ and $z$ and a vertex $w_x$ which is connected only to $y$ and $z$; see Figure \ref{F:Final}. We can also assume without loss of generality that there are $v_x,v_y\in \mathrm{dom}(E)$ which are determined over $A$, so that $xRv_x R x'$ and $yRv_y R y'$. Indeed
 the construction in Claim \ref{C:4} guarantees that such $v_x,v_y$ exist. Notice that   $v_x,v_y,w_x,w_y\in \mathcal{D}(\mathrm{dom}(A),E')$.

 We can now define two further extensions $B,C$ of $E'$ (and hence of $\widehat{A}$) which do not amalgamate over $A$. We define $B$  by adding to $E'$ a new vertex $s$ that is connected only with $w_x$ and $v_x$ and a new vertex $t$ that is connected only with $w_y$ and $v_y$. Finally, we also connect $s$ and $t$ by an edge. We have that $B\in\mathcal{K}(C_4)$. This follows from the fact that $E'\models \neg (v_x R v_y)$  which holds since otherwise we would already have a $4$-cycle $x',y',v_y,v_x$  in $E$.
The  definition of the extension $C$  of $E'$  is similar to $B$ with the only difference that instead of connecting $s$ and $t$ with an edge, we connect $s,t$ via a path $s,q,r,t$  of length $3$, where $q,r$ are entirely new vertices.

\begin{figure}[ht!]
\hspace*{-9em}{
\begin{tikzpicture}[x=0.68pt,y=0.75pt,yscale=-1,xscale=1]
\path (0,300); %set diagram left start at 0, and has height of 300
%Shape: Trapezoid [id:dp11128082769273773] 
\draw   (203,200) node (1){$\bullet$} -- (245,60) node (2){$\bullet$} -- (358,60) node (3){$\bullet$} -- (400,200) node (4){$\bullet$} -- cycle ;
%Straight Lines [id:da11455132888755903] 
\draw    (224,131) node (5){$\bullet$} -- (258,199) node (6){$\bullet$} ;
%Straight Lines [id:da4914081238487169] 
\draw    (379,131) node (7){$\bullet$} -- (343,200) node (8){$\bullet$}; 
%Straight Lines [id:da452326764757109] 
\draw    (304,60) node (9){$\bullet$}  -- (224,131);
%Straight Lines [id:da7595100902493044] 
\draw    (304,60) -- (379,131) ;
%Curve Lines [id:da31320322488399277] 
\draw    (245,60)  .. controls (351,27) and (385,29) .. (450,60) node (10){$\bullet$} ;
%Curve Lines [id:da5107129591851555] 
\draw    (450,60) .. controls (449,114) and (414,176) .. (400,200) ;
%Curve Lines [id:da7727317437915207] 
\draw    (152,61) .. controls (192,31) and (298,30) .. (358,60) ;
%Curve Lines [id:da6620497198607064] 
\draw    (152,61) node (11){$\bullet$}  .. controls (151,120) and (182,172) .. (200,200)  ;

%%%%%%%%BLBLBLBLBLBLBLBLBLBLBLBLBBLLBBLBLBL%%%%%%%%%%
\draw[dashed]    (152,61) .. controls  (225,0)  and (375,0)   .. (450,60) ;

\draw[dotted]    (152,61) .. controls  (225,-60)   and (375,-60)   .. (450,60) ;

\draw  (225,-10)  node (20){$\bullet$} ;
\draw  (225,-22)  node (20'){q} ;

\draw  (375,-10)  node (21){$\bullet$} ;
\draw  (375,-22)  node (21'){r} ;

%%%%%%%%BLBLBLBLBLBLBLBLBLBLBLBLBBLLBBLBLBL%%%%%%%%%%

% Text Node (140,47) (454,44)
\draw (298,65) node [anchor=north west][inner sep=0.75pt]    {$z$};
% Text Node
\draw (385,122) node [anchor=north west][inner sep=0.75pt]    {$y$};
% Text Node
\draw (208,122) node [anchor=north west][inner sep=0.75pt]    {$x$};
% Text Node
\draw (260,202) node [anchor=north west][inner sep=0.75pt]    {$x'$};
% Text Node
\draw (345,203) node [anchor=north west][inner sep=0.75pt]    {$y'$};
% Text Node
\draw (200,205) node [anchor=north west][inner sep=0.75pt]    {$v_{x}$};
% Text Node
\draw (394,205) node [anchor=north west][inner sep=0.75pt]    {$v_{y}$};
% Text Node
\draw (361,53) node [anchor=north west][inner sep=0.75pt]    {$w_{x}$};
% Text Node
\draw (220,53) node [anchor=north west][inner sep=0.75pt]    {$w_{y}$};
% Text Node 
\draw (140,47) node [anchor=north west][inner sep=0.75pt]    {$s$};
% Text Node
\draw (454,44) node [anchor=north west][inner sep=0.75pt]    {$t$};
\end{tikzpicture}
}
\vspace{-55pt} \label{F:Final}
\caption{The  edge $\{s,t\}$ belongs only to the graph $B$. The vertices $q,r$  and the edges $\{s,q\}, \{q,r\}, \{r,t\}$ belong only to the graph $C$.}
\end{figure}

We claim that $B,C$ do not amalgamate over $A$ to graph in $\mathcal{K}(C_4)$. Indeed, let $D\in \mathcal{K}(C_4)$ and let  $f\colon B\to D$ and $g\colon C\to D$ be embeddings with $f\res A= g\res A$. But then we have that $f(s)=g(s)$ and $f(t)=g(t)$ as well. Indeed,  by Lemma \ref{C:3} and  since $v_x,v_y,w_x,w_y\in \mathcal{D}(\mathrm{dom}(A),E')$, we have that $f(v_x)=g(v_x)$, $f(v_y)=g(v_y)$, $f(w_x)=g(w_x)$, $f(w_y)=g(w_y)$. Hence, if $f(s)\neq g(s)$ then $\{f(s),w_x,g(s),v_x\}$ would be a $4$-cycle in $D$, and  if $f(t)\neq g(t)$ then $f(t),w_y,g(t),v_y$ would be a $4$-cycle in $D$, contradicting that $D\in \mathcal{K}(C_4)$.  But then, since $f(s)=g(s)$ and $f(t)=g(t)$, the edge $\{f(s),f(t)\}$ in $D$ together with the path $g(s),g(q), g(r), g(t)$ in $D$ form a $4$-cycle in  $D$, contradicting that $D\in \mathcal{K}(C_4)$.

\end{proof}

Cherlin and Komj\'{a}th  \cite{CK} extended the results from \cite{HP} by showing that there is no universal element in the class $\mathcal{C}(C_k)$ of all  countable $C_k$-free graphs, for all $k>3$. 

\begin{question}
Does $\mathcal{C}(C_k)$ admit a generic element for some $k>4$? 
\end{question}

\end{document}